\theoremstyle{plain}
\newtheorem{theorem}{Theorem}[section]
\newtheorem{lemma}[theorem]{Lemma}
\newtheorem{corollary}[theorem]{Corollary}
\newtheorem{proposition}[theorem]{Proposition}
\newtheorem{observation}[theorem]{Observation}
\newtheorem{example}[theorem]{Example}
\newtheorem{remark}[theorem]{Remark}
\newtheorem{conjecture}[theorem]{Conjecture}
\newtheorem{question}[theorem]{Question}
\theoremstyle{definition}
\newtheorem{definition}[theorem]{Definition}
\newcommand{\diam}{\textnormal{diam}}
\newcommand{\code}{\textnormal{code}}
\def\finf{\mathop{{\rm I}\kern -.27 em {\rm F}}\nolimits}
\newcommand{\rmv}[1]{}
\begin{document}

\title{Maker-Breaker Metric Resolving Games on Graphs}

\author{{\bf{Cong X. Kang}}$^1$ and {\bf{Eunjeong Yi}}$^2$\\
\small Texas A\&M University at Galveston, Galveston, TX 77553, USA\\
{\small\em kangc@tamug.edu}$^1$; {\small\em yie@tamug.edu}$^2$}

\maketitle

\date{}

\begin{abstract}
 Let $d(x,y)$ denote the length of a shortest path between vertices $x$ and $y$ in a graph $G$ with vertex set $V$. For a positive integer $k$, let $d_k(x,y)=\min\{d(x,y), k+1\}$ and $R_k\{x,y\}=\{z\in V: d_k(x,z) \neq d_k(y,z)\}$. A set $S \subseteq V$ is a \emph{distance-$k$ resolving set} of $G$ if $S \cap R_k\{x,y\} \neq\emptyset$ for distinct $x,y\in V$. In this paper, we study the maker-breaker distance-$k$ resolving game (MB$k$RG) played on a graph $G$ by two players, Maker and Breaker, who alternately select a vertex of $G$ not yet chosen. Maker wins by selecting vertices which form a distance-$k$ resolving set of $G$, whereas Breaker wins by preventing Maker from winning. We denote by $O_{R,k}(G)$ the outcome of MB$k$RG. Let $\mathcal{M}$, $\mathcal{B}$ and $\mathcal{N}$, respectively, denote the outcome for which Maker, Breaker, and the first player has a winning strategy in MB$k$RG. Given a graph $G$, the parameter $O_{R,k}(G)$ is a non-decreasing function of $k$ with codomain $\{-1=\mathcal{B}, 0=\mathcal{N}, 1=\mathcal{M}\}$. We exhibit pairs $G$ and $k$ such that the ordered pair $(O_{R,k}(G), O_{R, k+1}(G))$ realizes each member of the set $\{(\mathcal{B}, \mathcal{N}),(\mathcal{B}, \mathcal{M}),(\mathcal{N},\mathcal{M})\}$; we provide graphs $G$ such that $O_{R,1}(G)=\mathcal{B}$, $O_{R,2}(G)=\mathcal{N}$ and $O_{R,k}(G)=\mathcal{M}$ for $k\ge3$. Moreover, we obtain some general results on MB$k$RG and study the MB$k$RG played on some graph classes.
\end{abstract}

\noindent\small {\bf{Keywords:}} distance-$k$ metric, $k$-truncated metric, resolving set, $k$-truncated resolving set, distance-$k$ resolving set, Maker-Breaker distance-$k$ resolving game, Maker-Breaker resolving game\\

\noindent \small {\bf{2010 Mathematics Subject Classification:}} 05C12, 05C57\\


\section{Introduction}

Games played on graphs have been studied extensively; examples of two-player games include cop and robber game~\cite{cop}, Hex board game~\cite{hex}, Maker-Breaker domination game~\cite{mbdg}, etc. Erd\"{o}s and Selfridge~\cite{erdos} introduced the Maker-Breaker game played on an arbitrary hypergraph $H=(V,E)$ by two players, Maker and Breaker, who alternately select a vertex from $V$ not yet chosen in the course of the game. Maker wins the game if he can select all vertices of a hyperedge from $E$, whereas Breaker wins if she is able to prevent Maker from doing so. For further reading on these games, see~\cite{beck, hefetz}. The Maker-Breaker resolving game (MBRG) was introduced in~\cite{mbrg}, and its fractionalization was studied in~\cite{FMBRG}. As it turns out, on any graph $G$ that is connected, MBRG fits as the terminal member of a natural family of metric resolving games numbering $(\diam(G)-1)$ in strength, where $\diam(G)$ is the length of a longest path found in $G$. In this paper, we introduce and study this family, and we call a general member of this family the \emph{Maker-Breaker distance-$k$ resolving game} (MB$k$RG). As will be made clear shortly, MB$k$RG equals MBRG when $k=\diam(G)-1$. But first, we need to set down some basic terminology and notations.

Let $G$ be a finite, simple, undirected, and connected graph with vertex set $V(G)$ and edge set $E(G)$. Let $k$ be a member of the set $\mathbb{Z}^+$ of positive integers. For $x,y\in V(G)$, let $d(x,y)$ denote the minimum number of edges in a path linking $x$ and $y$ in $G$, and let $d_k(x,y)=\min\{d(x,y),k+1\}$. Thus, $d(\cdot,\cdot)$ is the usual, shortest-path metric on $G$, and we can call $d_k(\cdot,\cdot)$ the \emph{distance-$k$} or the \emph{$k$-truncated metric} on $G$. For distinct $x,y\in V(G)$, let $R\{x,y\}=\{z\in V(G): d(x,z)\neq d(y,z)\}$ and $R_k\{x,y\}=\{z\in V(G): d_k(x,z)\neq d_k(y,z)\}$. A set $S\subseteq V(G)$ is a \emph{resolving set} of $G$ if $S \cap R\{x,y\} \neq\emptyset$ for distinct $x,y\in V(G)$, and the \emph{metric dimension} $\dim(G)$ of $G$ is the minimum cardinality over all resolving sets of $G$. Similarly, a set $S \subseteq V(G)$ is a \emph{distance-$k$ resolving set} (also called a \emph{$k$-truncated resolving set}) of $G$ if $S \cap R_k\{x,y\} \neq\emptyset$ for distinct $x,y\in V(G)$, and the \emph{distance-$k$ metric dimension} (also called the \emph{$k$-truncated metric dimension}) $\dim_k(G)$ of $G$ is the minimum cardinality over all distance-$k$ resolving sets of $G$. For an ordered set $S=\{u_1, u_2, \ldots, u_{\alpha}\} \subseteq V(G)$ and for a vertex $v\in V(G)$, the metric code of $v$ with respect to $S$ is the $\alpha$-vector $\code_{S}(v)=(d(v, u_1), d(v, u_2), \ldots, d(v, u_{\alpha}))$. We note that $S$ is a resolving set of $G$ if and only if the map $\code_{S}(\cdot)$ is injective on $V(G)$. By replacing the metric $d(\cdot,\cdot)$ by $d_k(\cdot,\cdot)$ mutatis mutandis, the notion of a distance-$k$ metric code map $\code_{S,k}(\cdot)$ on $V(G)$ and  a distance-$k$ resolving set are analogously defined; note that, again, $S\subseteq V(G)$ is a distance-$k$ resolving set of $G$ if and only if the map $\code_{S,k}(\cdot)$ is injective on $V(G)$.

The concept of metric dimension was introduced in~\cite{harary, slater}, and the concept of distance-$k$ metric dimension was introduced in~\cite{beardon, moreno_thesis}. For further study on distance-$k$ metric dimension of graphs, see~\cite{distKdim}, which is a result of merging~\cite{JE} and~\cite{tilquist} with some additional results, and~\cite{tilquist} is based on~\cite{tilquist_thesis}. Some applications of metric dimension include robot navigation~\cite{landmarks}, network discovery and verification~\cite{network}, chemistry~\cite{chemistry}, sonar~\cite{slater} and combinatorial optimization~\cite{sebo}. For applications of distance-$k$ metric dimension see~\cite{distKdim, adim}, where sensors/landmarks are placed at locations (vertices) forming a distance-$k$ resolving set of a network, with the understanding that the cost of a sensor/landmark increases as its detection range increases. It is known that determining the metric dimension of a general graph is an NP-hard problem (\cite{NP, landmarks}) and that determining the distance-$k$ metric dimension of a general graph is an NP-hard problem (\cite{juan1, juan2}).

Returning to the eponymous games of this paper, and following~\cite{mbrg}, MBRG (MB$k$RG, respectively) is played on a graph $G$ by two players, Maker (also called Resolver) and Breaker (also called Spoiler), denoted by $M^*$ and $B^*$, respectively. $M^*$ and $B^*$ alternately select (without missing their turn) a vertex of $G$ that was not yet chosen in the course of the game. $M$-game ($B$-game, respectively) denotes the game for which $M^*$ ($B^*$, respectively) plays first. $M^*$ wins the MBRG (MB$k$RG, respectively) if he is able to select vertices that form a resolving set (a distance-$k$ resolving set, respectively) of $G$ in the course of the game, and $B^*$ wins MBRG (MB$k$RG, respectively) if she stops Maker from winning. We denote by $O_R(G)$ and $O_{R,k}(G)$ the outcomes, respectively, of MBRG and MB$k$RG played on a graph $G$. Noting that there's no advantage for the second player in the MBRG, it was observed in~\cite{mbrg} that there are three realizable outcomes, as follows: $O_R(G)=\mathcal{M}$ if $M^*$ has a winning strategy whether he plays first or second in the MBRG, $O_R(G)=\mathcal{B}$ if $B^*$ has a winning strategy whether she plays first or second in the MBRG, and $O_R(G)=\mathcal{N}$ if the first player has a winning strategy in the MBRG. Analogously, we assign to $O_{R,k}(G)$ an element of $\{\mathcal{M}, \mathcal{B}, \mathcal{N}\}$ in accordance with each of the aforementioned outcomes.

The authors of~\cite{mbrg} studied the minimum number of moves needed for $M^*$ ($B^*$, respectively) to win the MBRG provided $M^*$ ($B^*$, respectively) has a winning strategy. In MBRG, let $M_{R}(G)$ ($M'_R(G)$, respectively) denote the minimum number of moves for $M^*$ to win the $M$-game ($B$-game, respectively) provided he has a winning strategy with $O_R(G)=\mathcal{M}$, and let $B_{R}(G)$ ($B'_R(G)$, respectively) denote the minimum number of moves for $B^*$ to win the $M$-game ($B$-game, respectively) provided she has a winning strategy with $O_R(G)=\mathcal{B}$. Suppose rivals $X$ and $Y$ compete to gain control over a network $Z$, with $X$ trying to install transmitters with limited range at certain nodes to form a distance-$k$ resolving set of the network, while $Y$ seeks to sabotage the effort by $X$. In this scenario, time becomes a matter of natural concern. If $O_{R,k}(G)=\mathcal{M}$, then we denote by $M_{R,k}(G)$ ($M'_{R,k}(G)$, respectively) the minimum number of moves for $M^*$ to win the $M$-game ($B$-game, respectively). If $O_{R,k}(G)=\mathcal{B}$, then we denote by $B_{R,k}(G)$ ($B'_{R,k}(G)$, respectively) the minimum number of moves for $B^*$ to win the $M$-game ($B$-game, respectively). If $O_{R,k}(G)=\mathcal{N}$, then we denote by $N_{R,k}(G)$ ($N'_{R,k}(G)$, respectively) the minimum number of moves for the first player to win the $M$-game ($B$-game, respectively).

Now, we recall a bit more terminology and notations. The \emph{diameter}, $\diam(G)$, of $G$ is $\max\{d(x,y): x,y \in V(G)\}$. For $v\in V(G)$, the \emph{open neighborhood} of $v$ is $N(v)=\{u \in V(G) : uv \in E(G)\}$, and the \emph{degree} of $v$ is $|N(v)|$; a \emph{leaf} (or an \emph{end-vertex}) is a vertex of degree one, and a \emph{major vertex} is a vertex of degree at least three. Let $P_n$, $C_n$ and $K_n$ respectively denote the path, the cycle and the complete graph on $n$ vertices. For positive integers $s$ and $t$, let $K_{s,t}$ denote the complete bi-partite graph with two parts of sizes $s$ and $t$. For $\alpha\in\mathbb{Z}^+$, let $[\alpha]$ denote the set $\{1,2,\ldots, \alpha\}$.

This paper is organized as follows. In Section~\ref{sec_comparison_outcomes}, we study the parameter $O_{R,k}(G)$ as a function of $k$. We exhibit pairs $G$ and $k$ such that the ordered pair $(O_{R,k-1}(G), O_{R,k}(G))$ realizes each member of the set $\{(\mathcal{B},\mathcal{N}),(\mathcal{B},\mathcal{M}),(\mathcal{N},\mathcal{M})\}$, and we define an MB$k$RG-outcome-transition number (a jump) as an integer $k>1$ such that $O_{R, k-1}(G)\neq O_{R,k}(G)$. In Section~\ref{sec_examples}, we examine $O_{R,k}(G)$ when $G$ is the Petersen graph, a complete multipartite graph, a wheel graph, a cycle, and a tree with some restrictions.


\section{The parameter $O_{R,k}(G)$ as a function of $k$}\label{sec_comparison_outcomes}

To facilitate our discussion of $O_{R,k}(G)$ as a function of $k$, let us assign outcomes $\mathcal{B}, \mathcal{N}, \text{and\ }\mathcal{M}$ the values of $-1, 0, \text{and\ }1$, respectively, and hence we have $\mathcal{B}<\mathcal{N}<\mathcal{M}$. It is clear from the definitions that $d_{k+1}(x,y)\geq d_k(x,y)$ and, for $k\geq \diam(G)-1$, $d_k(x,y)=d(x,y)$. Thus, we can identify $d(x,y)$ with $d_k(x,y)$ for any $k\geq \diam(G)-1$; we can likewise identify $O_R(G)$ with $O_{R,k}(G)$ for any $k\geq \diam(G)-1$. The following observation is instrumental to studying MB$k$RG as a function of $k$.

\begin{observation}\label{monotonicity_1}
On any graph, a distance-$k$ resolving set is a distance-$(k+1)$ resolving set.
\end{observation}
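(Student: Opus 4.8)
The plan is to reduce the statement to a single containment of the pairwise resolving sets, namely $R_k\{x,y\}\subseteq R_{k+1}\{x,y\}$ for every pair of distinct $x,y\in V(G)$. Granting this containment, the observation follows immediately: if $S$ is a distance-$k$ resolving set, then $S\cap R_k\{x,y\}\neq\emptyset$ for all distinct $x,y$, and since $R_k\{x,y\}\subseteq R_{k+1}\{x,y\}$ we obtain $S\cap R_{k+1}\{x,y\}\neq\emptyset$; that is, $S$ is a distance-$(k+1)$ resolving set.

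To establish the containment, I would fix distinct $x,y$ and take any $z\in R_k\{x,y\}$, so that $d_k(x,z)\neq d_k(y,z)$. Without loss of generality assume $d_k(x,z)<d_k(y,z)$. The key point is that the smaller truncated distance cannot be saturated by the cap: since $d_k(x,z)<d_k(y,z)\le k+1$, we have $d_k(x,z)\le k$, so the minimum defining $d_k(x,z)=\min\{d(x,z),k+1\}$ is attained by $d(x,z)$; that is, $d_k(x,z)=d(x,z)\le k$. Because $d(x,z)\le k<k+2$, the same reasoning gives $d_{k+1}(x,z)=\min\{d(x,z),k+2\}=d(x,z)=d_k(x,z)$, so raising the truncation level to $k+1$ leaves this distance unchanged.

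Finally I would invoke the monotonicity $d_{k+1}(\cdot,\cdot)\ge d_k(\cdot,\cdot)$ recorded just before the observation: this yields $d_{k+1}(y,z)\ge d_k(y,z)>d_k(x,z)=d_{k+1}(x,z)$, whence $d_{k+1}(x,z)\neq d_{k+1}(y,z)$ and therefore $z\in R_{k+1}\{x,y\}$. This proves the containment and hence the observation.

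As for difficulty, there is no genuine obstacle here; the argument is pure bookkeeping with the truncation operator. The one step requiring care is the middle one, namely recognizing that the strictly smaller of two truncated distances must equal the true graph distance (the cap being inactive), which is precisely what lets it be carried up to the $(k+1)$-truncation without change. An equivalent route, should one prefer to argue through codes, is to verify the contrapositive at the level of the code maps: if $\code_{S,k+1}(x)=\code_{S,k+1}(y)$ then $\code_{S,k}(x)=\code_{S,k}(y)$, by checking coordinatewise that $\min\{a,k+2\}=\min\{b,k+2\}$ forces $\min\{a,k+1\}=\min\{b,k+1\}$ for nonnegative integers $a,b$.
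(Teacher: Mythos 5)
Your proof is correct: the containment $R_k\{x,y\}\subseteq R_{k+1}\{x,y\}$ holds for exactly the reason you give (the strictly smaller of the two truncated distances satisfies $d_k(x,z)\le k$, so the cap is inactive and the value survives the passage to $d_{k+1}$ unchanged, while monotonicity keeps the larger value strictly above it), and the code-map contrapositive you sketch at the end is an equivalent packaging of the same fact. The paper states this as an Observation with no proof at all, so there is nothing to compare against; your argument is precisely the routine verification the authors leave implicit, and either of your two formulations would serve as the omitted proof.
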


The preceding observation easily leads to the following two observations, which are found in existing literature.

\begin{observation}\emph{\cite{beardon}}\label{obs_dim,dimK}
Let $G$ be a connected graph, and let $k,k'\in\mathbb{Z}^+$ with $k<k'$. Then $\dim_{k}(G)\geq \dim_{k'}(G)\geq\dim(G)$.
\end{observation}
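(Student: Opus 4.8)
The plan is to prove Observation~\ref{obs_dim,dimK}, which asserts the chain of inequalities $\dim_k(G) \geq \dim_{k'}(G) \geq \dim(G)$ whenever $k < k'$. The entire statement is a direct corollary of the monotonicity recorded in Observation~\ref{monotonicity_1}, so I would structure the argument around the single principle that \emph{enlarging $k$ can only make resolving easier}. Concretely, I would first establish the inequality $\dim_k(G) \geq \dim_{k'}(G)$ for $k < k'$, and then obtain $\dim_{k'}(G) \geq \dim(G)$ as a limiting instance of the same reasoning, using the identification of $d(\cdot,\cdot)$ with $d_k(\cdot,\cdot)$ for $k \geq \diam(G)-1$ noted in the text preceding the observation.

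For the first inequality, I would take an arbitrary distance-$k$ resolving set $S$ of $G$ that is minimum, so that $|S| = \dim_k(G)$. By Observation~\ref{monotonicity_1}, applied repeatedly (a distance-$k$ resolving set is a distance-$(k+1)$ resolving set, hence a distance-$(k+2)$ resolving set, and so on up to $k'$), the set $S$ is also a distance-$k'$ resolving set of $G$. Since $\dim_{k'}(G)$ is the minimum cardinality over \emph{all} distance-$k'$ resolving sets, we conclude $\dim_{k'}(G) \leq |S| = \dim_k(G)$, which is the desired inequality. The same argument delivers the second inequality: for $k'' \geq \diam(G)-1$ we have $d_{k''}(\cdot,\cdot) = d(\cdot,\cdot)$, so a distance-$k''$ resolving set is exactly a resolving set; applying the first inequality with $k' \leq k''$ gives $\dim_{k'}(G) \geq \dim_{k''}(G) = \dim(G)$.

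I do not anticipate a substantive obstacle here, as the result is essentially a bookkeeping consequence of Observation~\ref{monotonicity_1} together with the definition of each $\dim_k$ as a minimum. The only point requiring minor care is the iteration of monotonicity: Observation~\ref{monotonicity_1} is stated for the single step from $k$ to $k+1$, so I would note explicitly that a finite induction on the difference $k'-k$ promotes it to the general comparison between $k$ and any larger $k'$. One could alternatively give a self-contained direct proof by verifying that $d_k(x,z) \neq d_k(y,z)$ implies $d_{k'}(x,z) \neq d_{k'}(y,z)$ at the level of individual vertices, hence $R_k\{x,y\} \subseteq R_{k'}\{x,y\}$, but invoking the already-recorded Observation~\ref{monotonicity_1} keeps the argument cleaner and is the route I would take.
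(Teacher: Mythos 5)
Your proposal is correct and takes essentially the same approach as the paper: the paper offers no explicit proof of Observation~\ref{obs_dim,dimK} (it is quoted from the literature), but states that it ``easily leads'' from Observation~\ref{monotonicity_1}, which is precisely the deduction you spell out. Your added care --- the finite induction on $k'-k$ to promote the single-step monotonicity, and reducing $\dim_{k'}(G)\ge\dim(G)$ to the identification $\dim_{k''}(G)=\dim(G)$ for $k''\ge\diam(G)-1$ --- correctly fills in exactly what the paper leaves implicit.
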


\begin{observation}\emph{\cite{moreno_thesis}}\label{obs_dimK_diameter}
Let $k\in\mathbb{Z}^+$ and $G$ be a connected graph.
\begin{itemize}
\item[(a)] If $\diam(G)\in\{1,2\}$, then $\dim_k(G)=\dim(G)$ for all $k$.
\item[(b)] More generally, $\dim_k(G)=\dim(G)$ for all $k\ge \diam(G)-1$.
\end{itemize}
\end{observation}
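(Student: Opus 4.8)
The plan is to deduce both parts from the single fact that the truncated metric $d_k(\cdot,\cdot)$ coincides with the full metric $d(\cdot,\cdot)$ once $k$ is large enough, which collapses the entire statement to the definitions.

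First I would prove part (b). Fix $k\ge \diam(G)-1$. Then for every pair $x,y\in V(G)$ we have $d(x,y)\le \diam(G)\le k+1$, so that $d_k(x,y)=\min\{d(x,y),k+1\}=d(x,y)$; that is, $d_k$ and $d$ agree as functions on $V(G)\times V(G)$ (this is precisely the identification already noted just before Observation~\ref{monotonicity_1}). Consequently $R_k\{x,y\}=R\{x,y\}$ for every pair of distinct vertices $x,y$, and therefore a set $S\subseteq V(G)$ satisfies $S\cap R_k\{x,y\}\neq\emptyset$ for all distinct $x,y$ if and only if it satisfies $S\cap R\{x,y\}\neq\emptyset$ for all distinct $x,y$. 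In other words, the distance-$k$ resolving sets of $G$ are \emph{exactly} its resolving sets. Taking the minimum cardinality over these two identical families yields $\dim_k(G)=\dim(G)$, as claimed. Alternatively, since Observation~\ref{obs_dim,dimK} already supplies $\dim_k(G)\ge\dim(G)$, it would suffice to verify only the reverse inequality, which is immediate because any resolving set is a distance-$k$ resolving set once $d_k=d$.

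For part (a), I would simply observe that $\diam(G)\in\{1,2\}$ forces $\diam(G)-1\in\{0,1\}$, so every positive integer $k$ satisfies $k\ge 1\ge \diam(G)-1$. Hence part (b) applies for all $k\in\mathbb{Z}^+$, giving $\dim_k(G)=\dim(G)$ for all $k$. There is essentially no genuine obstacle here, as the result is a direct consequence of the definition of the truncated metric; the only points requiring minor care are the bookkeeping that $k$ ranges over positive integers (so that the case $\diam(G)=1$, i.e.\ $G=K_n$, is covered by $k\ge 0$) and making explicit the equivalence of the two resolving conditions through the set equality $R_k\{x,y\}=R\{x,y\}$.
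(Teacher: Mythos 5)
Your proof is correct and follows the same route the paper takes: the paper states this observation as immediate from the identification $d_k(x,y)=d(x,y)$ for $k\ge\diam(G)-1$ (noted just before Observation~\ref{monotonicity_1}), which is precisely the equality you verify and then push through the definitions via $R_k\{x,y\}=R\{x,y\}$. Your derivation of (a) from (b) by checking $k\ge 1\ge\diam(G)-1$ is likewise the intended reading.
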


The following monotonicity result follows readily from the rules of MB$k$RG and Observation~\ref{monotonicity_1}.

\begin{proposition}\label{monotonicity_2}
On any graph $G$, the parameter $O_{R,k}(G)$ is a non-decreasing function of $k$, where the codomain is $\{\mathcal{B}=-1, \mathcal{N}=0, \mathcal{M}=1\}$.
\end{proposition}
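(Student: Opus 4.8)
The plan is to reduce everything to a single \emph{transfer principle}: any winning strategy for Maker in MB$k$RG is also a winning strategy for Maker in MB$(k+1)$RG, with the role of who moves first held fixed. Once this is established, the non-decreasing behavior of $O_{R,k}(G)$ in the ordered codomain $\{\mathcal{B}<\mathcal{N}<\mathcal{M}\}$ follows from a short case analysis on the value $O_{R,k}(G)$.

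First I would observe that MB$k$RG and MB$(k+1)$RG are played on exactly the same board, namely the vertex set $V(G)$, with the same alternation of moves; the only difference lies in Maker's winning condition. By Observation~\ref{monotonicity_1}, every distance-$k$ resolving set is a distance-$(k+1)$ resolving set. Hence, if Maker follows a strategy that guarantees his chosen vertices form a distance-$k$ resolving set (in either the $M$-game or the $B$-game), then the very same set is automatically a distance-$(k+1)$ resolving set, so the identical strategy wins the corresponding game in MB$(k+1)$RG. Since a Maker strategy is just a map from game histories to moves and is independent of the winning condition, the play against any fixed opponent unfolds identically in both games; this yields the transfer principle in both the $M$-game and the $B$-game. (One may also note that resolving sets are upward closed, so Maker never loses a win by claiming further vertices.)

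Next I would translate the transfer principle into a statement about outcomes, recalling that $O_{R,k}(G)=\mathcal{M}$ means Maker wins both the $M$-game and the $B$-game; $O_{R,k}(G)=\mathcal{N}$ means the first player wins, i.e. Maker wins the $M$-game but Breaker wins the $B$-game; and $O_{R,k}(G)=\mathcal{B}$ means Breaker wins both games. If $O_{R,k}(G)=\mathcal{M}$, then Maker wins both games in MB$k$RG, so by transfer he wins both games in MB$(k+1)$RG, giving $O_{R,k+1}(G)=\mathcal{M}$. If $O_{R,k}(G)=\mathcal{N}$, then Maker wins the $M$-game in MB$k$RG, so by transfer he wins the $M$-game in MB$(k+1)$RG; this forbids $O_{R,k+1}(G)=\mathcal{B}$, which would require Breaker to win even as second player, whence $O_{R,k+1}(G)\in\{\mathcal{N},\mathcal{M}\}$. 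Finally, if $O_{R,k}(G)=\mathcal{B}$ there is nothing to prove, since $\mathcal{B}$ is the minimum of the codomain. In every case $O_{R,k+1}(G)\geq O_{R,k}(G)$, which is the desired conclusion.

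The argument is almost entirely formal, so I do not anticipate a serious obstacle; the one point that warrants care is the bookkeeping of the $\mathcal{N}$ case, where I must invoke only the \emph{half} of the transfer principle concerning the $M$-game and correctly argue that a Maker win as first player rules out $\mathcal{B}$ while leaving both $\mathcal{N}$ and $\mathcal{M}$ admissible. It is also worth stating explicitly that the transfer principle does \emph{not} run in reverse, which is precisely why the function can strictly increase and why the realizability examples promised in the introduction are not vacuous.
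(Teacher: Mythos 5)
Your proof is correct and takes essentially the same route as the paper, which states that the proposition ``follows readily from the rules of MB$k$RG and Observation~\ref{monotonicity_1}''; your transfer principle (a Maker strategy is a map from histories to moves, independent of the winning condition, and the winning condition only relaxes as $k$ grows) plus the three-case analysis on $\{\mathcal{B},\mathcal{N},\mathcal{M}\}$ is exactly the intended argument, spelled out. Your handling of the $\mathcal{N}$ case, invoking only the $M$-game half of the transfer to exclude $\mathcal{B}$ at level $k+1$, is the right bookkeeping.
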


It's worth noting that there are exactly ${\diam(G)+1 \choose 2}$ monotone functions from $[\diam(G)-1]$ to $\{-1,0,1\}$. To see this, observe that such a function is but a vector consisting of a string of $-1$'s, followed by a string of $0$'s, and then followed by a string of $1$'s, satisfying $a+b+c=\diam(G)-1$ and $a,b,c\in\{0\}\cup\mathbb{Z}^+$, where $a$, $b$, and $c$ respectively denote the number of $-1$'s, $0$'s, and $1$'s. The number of solutions to the diophantine equation clearly equals the number of monomials of total degree $(\diam(G)-1)$ in $3$ symbols, and the latter is well known to be the combinatoric symbol asserted. Now, we spotlight a few key results among the bounty of consequences yielded by Proposition~\ref{monotonicity_2}.

\begin{corollary}\label{mbrg_mbtrg}
Let $k\in\mathbb{Z}^+$ and $G$ be a connected graph.
\begin{itemize}
\item[(a)] If $\diam(G)\in\{1,2\}$, then $O_{R,k}(G)=O_R(G)$ for all $k$.
\item[(b)] Given $k<k'$, we have $(O_{R,k}(G), O_{R,k'}(G))\in\{(\mathcal{B},\mathcal{B}),(\mathcal{N},\mathcal{N}),(\mathcal{M},\mathcal{M})$, $(\mathcal{B},\mathcal{N}),(\mathcal{B},\mathcal{M}),(\mathcal{N},\mathcal{M})\}$; note that $O_{R,k_0}(G)$ is $O_{R}(G)$ for $k_0\geq \diam(G)-1$.
\end{itemize}
\end{corollary}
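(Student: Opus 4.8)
The plan is to derive both parts directly from the monotonicity already recorded in Proposition~\ref{monotonicity_2}, together with the identification $O_{R,k}(G)=O_R(G)$ valid for $k\geq\diam(G)-1$ noted in the opening paragraph of this section. Since the substantive work has already been done, the proof amounts to careful bookkeeping; the only step requiring genuine attention is the enumeration in part~(b).

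For part~(a), I would observe that $\diam(G)\in\{1,2\}$ forces $\diam(G)-1\in\{0,1\}$, so that every $k\in\mathbb{Z}^+$ satisfies $k\geq 1\geq\diam(G)-1$. Unwinding the definition of the truncated metric, this says $k+1\geq\diam(G)\geq d(x,y)$ for all $x,y\in V(G)$, whence $d_k(x,y)=\min\{d(x,y),k+1\}=d(x,y)$. Consequently the distance-$k$ metric coincides with the shortest-path metric, the family of distance-$k$ resolving sets equals the family of resolving sets, and MB$k$RG is literally MBRG for every $k$. Invoking the identification $O_{R,k}(G)=O_R(G)$ for $k\geq\diam(G)-1$ then yields $O_{R,k}(G)=O_R(G)$ for all $k$, as claimed.

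For part~(b), the inclusion is simply the assertion that $(O_{R,k}(G),O_{R,k'}(G))$ is a non-decreasing pair whenever $k<k'$, which is exactly the content of Proposition~\ref{monotonicity_2}. I would then verify that the six listed pairs are precisely those ordered pairs $(a,b)$ with $a,b\in\{\mathcal{B},\mathcal{N},\mathcal{M}\}$ satisfying $a\leq b$ under the ordering $\mathcal{B}<\mathcal{N}<\mathcal{M}$: the three diagonal pairs $(\mathcal{B},\mathcal{B})$, $(\mathcal{N},\mathcal{N})$, $(\mathcal{M},\mathcal{M})$ together with the three strictly increasing pairs $(\mathcal{B},\mathcal{N})$, $(\mathcal{B},\mathcal{M})$, $(\mathcal{N},\mathcal{M})$ exhaust the non-decreasing possibilities and exclude precisely the three decreasing pairs $(\mathcal{N},\mathcal{B})$, $(\mathcal{M},\mathcal{B})$, $(\mathcal{M},\mathcal{N})$. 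The trailing remark that $O_{R,k_0}(G)=O_R(G)$ for $k_0\geq\diam(G)-1$ is, once more, the identification recalled above.

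The only real pitfall is a notational one: one must confirm that the value assignment $\mathcal{B}=-1<\mathcal{N}=0<\mathcal{M}=1$ fixed at the start of the section is the very ordering under which Proposition~\ref{monotonicity_2} asserts monotonicity, so that ``non-decreasing'' translates faithfully into the six-element list and the three forbidden reversals are correctly ruled out. With that convention in hand, both parts follow at once.
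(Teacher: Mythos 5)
Your proposal is correct and follows exactly the route the paper intends: the paper offers no separate proof, presenting the corollary as an immediate consequence of Proposition~\ref{monotonicity_2} together with the identification $O_{R,k}(G)=O_R(G)$ for $k\geq\diam(G)-1$ stated at the start of Section~\ref{sec_comparison_outcomes}, which is precisely your argument. Your added verification that $d_k(\cdot,\cdot)=d(\cdot,\cdot)$ when $k\geq\diam(G)-1$ and your explicit enumeration of the six non-decreasing pairs simply spell out details the paper leaves implicit.
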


The monotonicity of $O_{R,k}(G)$ as a function of $k$ with codomain $\{\mathcal{B},\mathcal{N},\mathcal{M}\}$ prompts natural questions. Given a graph $G$, what is the range of $O_{R,k}(G)$? Also, where does the function ``jump value"? To this end, for $X, Y\in \{\mathcal{B},\mathcal{N},\mathcal{M}\}$ with $X<Y$, define \emph{the MB$k$RG-outcome-transition number (the jump) of $G$ from $X$ to $Y$}, denoted by $O^T_{X,Y}(G)$, to be the number $\alpha \in [\diam(G)-1]-\{1\}$ satisfying $O_{R,\alpha-1}(G)=X$ and $O_{R,\alpha}(G)=Y$, when such an $\alpha$ exists; put $O^T_{X,Y}(G)=\emptyset$ otherwise. Clearly, each graph $G$ has no more than two jumps and, if $O^T_{\mathcal{B},\mathcal{M}}(G)\neq\emptyset$, then $G$ has only one jump.\\

Now, we work towards realization results in conjunction with Proposition~\ref{monotonicity_2} and Corollary~\ref{mbrg_mbtrg}. First, we recall some terminology. Two vertices $u$ and $w$ are called \emph{twins} if $N(u)-\{w\}=N(w)-\{u\}$; notice that a vertex is its own twin. Hernando et al.~\cite{Hernando} observed that the twin relation is an equivalence relation on $V(G)$ and, under it, each (\emph{twin}) equivalence class induces either a clique or an independent set. The next few results involve twin equivalence classes.

\begin{observation}\label{obs_twin}
Let $x$ and $y$ be distinct members of the same twin equivalence class of $G$.
\begin{itemize}
\item[(a)] \emph{\cite{Hernando}} If $R$ is a resolving set of $G$, then $R \cap\{x,y\} \neq\emptyset$.
\item[(b)] \emph{\cite{distKdim}} If $R_k$ is a distance-$k$ resolving set of $G$, then $R_k\cap \{x,y\} \neq\emptyset$.
\end{itemize}
\end{observation}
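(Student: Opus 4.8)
The plan is to isolate a single metric lemma and derive both parts from it: if $x$ and $y$ are distinct twins, then $d(x,z)=d(y,z)$ for every $z\in V(G)$ with $z\notin\{x,y\}$. This treats adjacent and non-adjacent twins simultaneously, since the only property used is the defining equality $N(x)-\{y\}=N(y)-\{x\}$.

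To prove the lemma, fix such a $z$; by symmetry it suffices to show $d(z,y)\le d(z,x)$ and then swap the roles of $x$ and $y$. Choose a shortest path $P$ from $z$ to $x$. If $y$ lies on $P$, then $y$ sits on a geodesic from $z$ to $x$, so $d(z,y)\le d(z,x)$ at once. If $y\notin P$, let $w$ be the vertex of $P$ adjacent to $x$; then $w\neq y$ and $w\in N(x)-\{y\}=N(y)-\{x\}$, so $w\in N(y)$ and hence $d(z,y)\le d(z,w)+1=d(z,x)$. Either way $d(z,y)\le d(z,x)$, and the symmetric argument gives the reverse inequality, so $d(z,x)=d(z,y)$.

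The point to watch is precisely the adjacent-twin case, where one might fear that a shortest path to $x$ is forced through $y$; the two-sided inequality above sidesteps this, since the ``$y\in P$'' branch already yields the inequality in the direction we need rather than an obstruction. Once the lemma is in hand, both deductions are immediate. For part (a), the lemma shows that no $z\notin\{x,y\}$ belongs to $R\{x,y\}$, that is $R\{x,y\}\subseteq\{x,y\}$; since $R$ is a resolving set, $R\cap R\{x,y\}\neq\emptyset$, and therefore $R\cap\{x,y\}\neq\emptyset$. For part (b), applying $\min\{\cdot,k+1\}$ to the equal distances gives $d_k(x,z)=d_k(y,z)$ for every $z\notin\{x,y\}$, so likewise $R_k\{x,y\}\subseteq\{x,y\}$; as $R_k$ is a distance-$k$ resolving set, $R_k\cap R_k\{x,y\}\neq\emptyset$, forcing $R_k\cap\{x,y\}\neq\emptyset$.
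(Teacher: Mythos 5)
Your proof is correct. Note that the paper itself gives no proof of this observation: it is stated as a known fact, citing \cite{Hernando} for part (a) and \cite{distKdim} for part (b), so there is no internal argument to compare against; what you have written is a correct, self-contained version of the standard argument underlying those references. Your key lemma --- distinct twins $x,y$ satisfy $d(x,z)=d(y,z)$ for every $z\notin\{x,y\}$ --- is exactly the right reduction: it yields $R\{x,y\}\subseteq\{x,y\}$, and since truncation by $\min\{\cdot,k+1\}$ preserves equality of distances, also $R_k\{x,y\}\subseteq\{x,y\}$, after which both parts are immediate from the definitions. Your two-sided-inequality treatment of the adjacent-twin case is sound: when $y$ lies on the chosen $z$--$x$ geodesic you get $d(z,y)\le d(z,x)$ for free, and otherwise the penultimate vertex $w$ satisfies $w\ne y$, hence $w\in N(x)-\{y\}=N(y)-\{x\}$ and $d(z,y)\le d(z,w)+1=d(z,x)$; swapping $x$ and $y$ closes the argument (and one may remark, though it is not needed, that $x\in R_k\{x,y\}$ since $d_k(x,x)=0<d_k(y,x)$, so the resolving condition is never vacuous).
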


\begin{proposition}\label{mbrg_twin}
Let $G$ be a connected graph of order at least $4$.
\begin{itemize}
\item[(a)] \emph{\cite{mbrg}} If $G$ has a twin equivalence class of cardinality at least $4$, then $O_R(G)=\mathcal{B}$.
\item[(b)] \emph{\cite{mbrg}} If $G$ has two distinct twin equivalence classes of cardinality at least $3$, then $O_R(G)=\mathcal{B}$.
\item[(c)] \emph{\cite{FMBRG}} If $G$ has $k \ge 0$ twin equivalence class(es) of cardinality $2$ and exactly one twin equivalence class of cardinality $3$ with $\dim(G)=k+2$, then $O_R(G)=\mathcal{N}$.
\end{itemize}
\end{proposition}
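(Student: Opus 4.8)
The plan is to reduce all three parts to one principle. By Observation~\ref{obs_twin}(a), a resolving set of $G$ omits at most one vertex from each twin equivalence class; equivalently, if $B^*$ ever claims two vertices lying in a common twin class $T$, then $M^*$'s final set meets $T$ in at most $|T|-2$ vertices and cannot be resolving. This single fact drives (a) and (b). For (a), let $C$ be a twin class with $|C|\ge 4$; I would show $B^*$ can secure two vertices of $C$ whichever side moves first. In the $B$-game she takes a vertex of $C$ on move one, and since $|C|\ge 4$ at least two vertices of $C$ survive $M^*$'s reply, so she grabs a second next turn. In the $M$-game, $M^*$'s opening removes at most one vertex of $C$, leaving $\ge 3$, and the same counting (each of $M^*$'s two relevant moves kills at most one vertex of $C$, but four are available) lets $B^*$ claim two. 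For (b), with twin classes $C_1,C_2$ of size $\ge 3$, I would isolate the sub-lemma that whenever $B^*$ plays first into a size-$\ge 3$ class $M^*$ has not yet touched, she claims two of its vertices (a class of size $\ge 3$ retains an unclaimed vertex after $B^*$'s first pick and $M^*$'s reply). In the $B$-game she opens in $C_1$; in the $M$-game, $M^*$'s single opening touches at most one of $C_1,C_2$, so $B^*$ opens into the untouched one. In every case $O_R(G)=\mathcal{B}$.

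Part (c) is the substantive one. Write the size-$3$ class as $C=\{a,b,c\}$ and the size-$2$ classes as $D_i=\{u_i,w_i\}$, $i\in[k]$, all other twin classes being singletons. The first step is a characterization: $S\subseteq V(G)$ is resolving if and only if $S$ meets each $D_i$ and meets $C$ in at least two vertices; call such an $S$ a \emph{full transversal}. Necessity is Observation~\ref{obs_twin}(a). For sufficiency I would exploit that the transposition of two twins is an automorphism of $G$ (it preserves adjacency, the two vertices having the same neighborhood off the pair) that fixes every other vertex. Since $\dim(G)=k+2$ exactly meets the twin lower bound $k+2$, a minimum resolving set must take exactly one vertex of each $D_i$, exactly two of $C$, and nothing else, i.e.\ it is a minimal full transversal. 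The transpositions inside $C$ generate the full symmetric group on $\{a,b,c\}$ and the transposition inside each $D_i$ swaps its two vertices, so these automorphisms act transitively on the set of minimal full transversals; as automorphisms preserve resolvability, every minimal full transversal is resolving, and hence so is every superset, proving sufficiency.

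With the characterization in hand, $M^*$ wins exactly when he claims a full transversal, while $B^*$ wins as soon as she claims two vertices of $C$ or both vertices of some $D_i$. The decisive resource is $C$: whoever first claims a vertex of $C$ can guarantee two of its three vertices, since after the opening any reply of the opponent leaves a $C$-vertex for the opener. Thus in the $B$-game $B^*$ opens in $C$ and secures two of $C$ within two moves, so $M^*$ cannot win the $B$-game. In the $M$-game $M^*$ opens in $C$ and then plays a response strategy: he finishes $C$ the instant $B^*$ enters it, answers any $B^*$ move into a still-unmet $D_i$ by taking its partner, and otherwise claims an unmet $D_i$ on a free move. Because each $B^*$ move is a single vertex and so poses at most one threat, $M^*$ never faces two simultaneous threats; he keeps both vertices of every $D_i$ out of $B^*$'s hands and completes two vertices of $C$, assembling a full transversal. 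Hence $M^*$ wins the $M$-game while $B^*$ wins the $B$-game, giving $O_R(G)=\mathcal{N}$.

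I expect the main obstacle to be the sufficiency half of the characterization in (c): a priori a full transversal could fail to distinguish two vertices in different classes. The twin-automorphism transitivity argument, together with the tightness $\dim(G)=k+2$, is exactly what rules this out and turns the purely local twin conditions into a global resolvability criterion. The ensuing game analysis then rests on the elementary but essential point that a single move creates only a single threat, so that the player who first seizes the three-vertex class $C$ controls the outcome.
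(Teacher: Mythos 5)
Your proposal is correct, and it is worth noting that the paper itself does not prove Proposition~\ref{mbrg_twin} --- it quotes parts (a) and (b) from \cite{mbrg} and part (c) from \cite{FMBRG} --- so the comparison is with those sources' standard arguments. For (a) and (b) your counting is exactly the known strategy: by Observation~\ref{obs_twin}(a) Breaker wins the moment she owns two vertices of one twin class, and your move-counts ($|C|\ge 4$ survives two Maker moves plus two Breaker picks; a class of size $\ge 3$ untouched by Maker survives one reply) are right in both the $M$- and $B$-games. For (c) your skeleton (Breaker grabs two of the $3$-class in the $B$-game; Maker opens in the $3$-class and then follows the pairing $\{b,c\},\{u_i,w_i\}$ in the $M$-game) is the same as in \cite{FMBRG}; the genuinely substantive step, as you identified, is that Maker's final set --- an arbitrary ``full transversal'' --- is actually resolving, and here your treatment is a clean, fully explicit version of what is usually waved through as ``by relabeling/symmetry'': you use the tightness hypothesis $\dim(G)=k+2$ to force a minimum resolving set to be a minimal full transversal, then observe that twin-swaps are automorphisms whose generated group acts transitively on minimal full transversals, so resolvability propagates to all of them and to their supersets. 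That lemma is exactly where the hypothesis $\dim(G)=k+2$ enters, and without it the claim fails, so your emphasis is well placed. Two harmless loose ends you could tighten: your $M$-game strategy should say what Maker does on a free move when every $D_i$ is already met (any free vertex works, since each Breaker move creates at most one threat and the reactive responses $\{b,c\}$ and $\{u_i,w_i\}$ are always available --- this is the standard pairing invariant); and you implicitly use that in part (c) all twin classes other than $C$ and the $D_i$ are singletons, which is the intended reading of the hypothesis (a class of size $\ge 4$ would contradict the conclusion via part (a)) and should be stated.
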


\begin{corollary}\label{mbtrg_twin}
Let $k\in\mathbb{Z}^+$, and let $G$ be a connected graph of order at least $4$.
\begin{itemize}
\item[(a)] If $G$ has a twin equivalence class of cardinality at least $4$, then $O_{R, k}(G)=\mathcal{B}$ for all $k$.
\item[(b)] If $G$ has two distinct twin equivalence classes of cardinality $3$, then $O_{R,k}(G)=\mathcal{B}$ for all $k$.
\end{itemize}
\end{corollary}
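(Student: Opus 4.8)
The plan is to read the corollary as an immediate consequence of Proposition~\ref{mbrg_twin} combined with the monotonicity recorded in Proposition~\ref{monotonicity_2}. The crucial point is that the twin relation, and hence both hypotheses on twin equivalence classes, make no reference to $k$: a twin equivalence class of $G$ is the same object whatever the value of $k$. So once the hypotheses force $O_R(G)=\mathcal{B}$, the conclusion for every $k$ will be purely formal.

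First I would invoke Proposition~\ref{mbrg_twin} to pin down $O_R(G)$. For part (a), a twin equivalence class of cardinality at least $4$ gives $O_R(G)=\mathcal{B}$ by Proposition~\ref{mbrg_twin}(a). For part (b), two distinct twin equivalence classes of cardinality $3$ are in particular two distinct twin equivalence classes of cardinality at least $3$, so Proposition~\ref{mbrg_twin}(b) again yields $O_R(G)=\mathcal{B}$. Thus in both cases $O_R(G)=\mathcal{B}$.

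Next I would transfer this to every $k$ using monotonicity. Recall that $O_R(G)=O_{R,k_0}(G)$ whenever $k_0\geq\diam(G)-1$, and that by Proposition~\ref{monotonicity_2} the map $k\mapsto O_{R,k}(G)$ is non-decreasing with codomain $\{\mathcal{B}=-1,\mathcal{N}=0,\mathcal{M}=1\}$. Fixing $k_0\geq\diam(G)-1$, we have $O_{R,k_0}(G)=O_R(G)=\mathcal{B}$, and for every $k\leq k_0$ monotonicity forces $O_{R,k}(G)\leq O_{R,k_0}(G)=\mathcal{B}=-1$. Since $\mathcal{B}$ is the least element of the codomain, equality must hold, so $O_{R,k}(G)=\mathcal{B}$ for all $k\in\mathbb{Z}^+$. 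This settles both parts.

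For a self-contained argument that does not pass through $O_R(G)$, I would instead exhibit an explicit Breaker strategy using Observation~\ref{obs_twin}(b), which holds verbatim for every $k$: any distance-$k$ resolving set must meet $\{x,y\}$ for each pair of distinct twins $x,y$, hence must contain all but at most one vertex of each twin class, so Breaker wins as soon as she owns two vertices of a single twin class. In part (a), with a class $C$ of size at least $4$, Breaker answers in $C$ whenever a vertex of $C$ remains; before her second such move at most three vertices of $C$ have been taken, and $|C|\geq4$ guarantees a fourth, so she secures two vertices of $C$. In part (b) the $B$-game is easy—Breaker grabs one vertex of a class and completes a second on her next move—while the $M$-game is the only delicate point: after Maker's first move Breaker plays in a class in which Maker has not yet played, and then completes two vertices there before Maker can claim both of the remaining two. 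The main obstacle, if one argues directly, is exactly this tempo step in the $M$-game of part (b): one must verify that a single class of size $3$ is insufficient (consistent with Proposition~\ref{mbrg_twin}(c) giving $\mathcal{N}$), and that two such classes always leave Breaker a class in which she effectively moves first. Through the monotonicity route above, this delicacy is absorbed into Proposition~\ref{mbrg_twin} and no fresh game analysis is required.
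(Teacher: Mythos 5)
Your main argument is exactly the paper's (implicit) derivation: the paper states this as a corollary precisely because Proposition~\ref{mbrg_twin} pins down $O_R(G)=\mathcal{B}$, and Proposition~\ref{monotonicity_2} together with the identification $O_{R,k}(G)=O_R(G)$ for $k\ge\diam(G)-1$ forces $O_{R,k}(G)=\mathcal{B}$ for all $k$, since $\mathcal{B}$ is the minimum of the codomain. Your supplementary direct Breaker-strategy sketch via Observation~\ref{obs_twin}(b) is also sound, but the monotonicity route is the intended and sufficient proof.
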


Here is a relation between $\dim_k(G)$ and $O_{R,k}(G)$, which is analogous to the relation between $\dim(G)$ and $O_R(G)$ obtained in~\cite{mbrg}.

\begin{observation}\label{obs_dimK}
Let $k\in\mathbb{Z}^+$, and let $G$ be a connected graph of order $n\ge2$.
\begin{itemize}
\item[(a)] If $O_{R,k}(G)=\mathcal{M}$, then $\dim_k(G)\le \lfloor\frac{n}{2}\rfloor$.
\item[(b)] If $\dim_k(G) \ge \lceil\frac{n}{2}\rceil+1$, then $O_{R,k}(G)=\mathcal{B}$.
\end{itemize}
\end{observation}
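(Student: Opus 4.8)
The plan is to reduce both parts to a simple parity count of how many vertices each player can possibly accrue over the course of MB$k$RG, combined with the defining fact that a Maker win is precisely the event that the vertices Maker has selected form a distance-$k$ resolving set of $G$. The basic bookkeeping observation is that if the two players continue until all $n$ vertices of $G$ are exhausted, then the player who moves first claims $\lceil\frac{n}{2}\rceil$ vertices while the player who moves second claims $\lfloor\frac{n}{2}\rfloor$ vertices. Since the game may instead terminate earlier, as soon as Maker achieves a distance-$k$ resolving set, these two counts are in fact upper bounds on the number of vertices each player ever holds, which is exactly the direction of inequality we will need.

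For part (a), I would invoke the $B$-game, in which $B^*$ moves first and hence $M^*$ moves second. Since $O_{R,k}(G)=\mathcal{M}$, Maker wins the $B$-game; that is, the set $S$ of vertices Maker has selected at the moment of victory is a distance-$k$ resolving set of $G$. Because Maker moves second in the $B$-game, we have $|S|\le\lfloor\frac{n}{2}\rfloor$, and therefore $\dim_k(G)\le |S|\le\lfloor\frac{n}{2}\rfloor$, as claimed.

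For part (b), I would argue that the hypothesis $\dim_k(G)\ge\lceil\frac{n}{2}\rceil+1$ makes it impossible for Maker to win either game, forcing $O_{R,k}(G)=\mathcal{B}$. In the $M$-game Maker moves first and so holds at most $\lceil\frac{n}{2}\rceil$ vertices at any point; in the $B$-game he holds at most $\lfloor\frac{n}{2}\rfloor\le\lceil\frac{n}{2}\rceil$. In both games the number of vertices ever available to Maker is strictly less than $\dim_k(G)$, so no collection of vertices that Maker could possibly assemble is large enough to be a distance-$k$ resolving set. Hence Breaker wins both the $M$-game and the $B$-game, which by definition yields $O_{R,k}(G)=\mathcal{B}$.

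The argument is essentially counting, so I do not anticipate a genuine obstacle; the only point demanding care is the correct pairing of \emph{who moves first} with the parity of $n$ (first player gets $\lceil\frac{n}{2}\rceil$, second player gets $\lfloor\frac{n}{2}\rfloor$), together with the remark that early termination of the game only decreases Maker's vertex count and therefore preserves the inequalities in precisely the directions required by (a) and (b).
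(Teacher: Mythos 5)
Your proposal is correct and takes essentially the same approach as the paper: both rest on the counting bound that Maker holds at most $\lfloor\frac{n}{2}\rfloor$ vertices as second player in the $B$-game and at most $\lceil\frac{n}{2}\rceil$ as first player in the $M$-game. The only cosmetic differences are that the paper phrases (a) as a proof by contradiction where you argue directly, and in (b) the paper treats only the $M$-game explicitly (the $B$-game being implicit) where you spell out both games.
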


\begin{proof}
Let $k\in\mathbb{Z}^+$, and let $G$ be a connected graph of order $n\ge2$.

(a) Let $O_{R,k}(G)=\mathcal{M}$. Assume, to the contrary, that $\dim_k(G)> \lfloor\frac{n}{2}\rfloor$. In the $B$-game of the MB$k$RG, $M^*$ can occupy at most $\lfloor\frac{n}{2}\rfloor$ vertices, and hence $M^*$ fails to occupy a distance-$k$ resolving set of $G$; thus, $O_{R,k}(G)\neq\mathcal{M}$, which contradicts the hypothesis.

(b) Let $\dim_k(G)\ge \lceil\frac{n}{2}\rceil+1$. In the $M$-game of the MB$k$RG, $M^*$ can occupy at most $\lceil\frac{n}{2}\rceil$ vertices of $G$; thus, $M^*$ fails to occupy vertices that form a distance-$k$ resolving set of $G$. Since $B^*$ has a winning strategy for the $M$-game in the MB$k$RG, $O_{R,k}(G)=\mathcal{B}$.~\hfill
\end{proof}

Analogous to the concept of a pairing dominating set (see~\cite{mbdg}) and a pairing resolving set (see~\cite{mbrg}), we define a pairing distance-$k$ resolving set and a quasi-pairing distance-$k$ resolving set of a graph.

\begin{definition}
Let $k, \alpha\in\mathbb{Z}^+$ and $G$ be a connected graph. Let $X=\displaystyle\bigcup_{i\in [\alpha]}\{\{u_i, w_i\}\}$, where $\bigcup X\subseteq V(G)$ and $|\bigcup X|=2\alpha$. Let $Z\subseteq V(G)$ be such that $|Z|=\alpha$ and $Z\cap \{u_i, w_i\}\neq \emptyset$ for each $i\in[\alpha]$.
\begin{itemize}
\item[(a)] Suppose each $Z$, as defined, is a distance-$k$ resolving set of $G$, then $X$ is called a \emph{pairing distance-$k$ resolving set} of $G$.
\item[(b)] Suppose each $Z$, as defined, fails to be a distance-$k$ resolving set of $G$, and there exists a vertex $v\in V(G)- \bigcup X$ such that $Z \cup \{v\}$ is a distance-$k$ resolving set of $G$ for each $Z$, then $X$ is called a \emph{quasi-pairing distance-$k$ resolving set} of $G$.
\end{itemize}
\end{definition}

\begin{observation}\emph{\cite{mbrg}}\label{pair_resolving_mbrg}
If $G$ admits a pairing resolving set, then $O_{R}(G)=\mathcal{M}$.
\end{observation}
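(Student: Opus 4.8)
The plan is to exhibit a winning pairing strategy for Maker that succeeds whether he moves first or second, which by definition establishes $O_R(G)=\mathcal{M}$. Recall (specializing the pairing distance-$k$ resolving set just defined) that a pairing resolving set is a family $X=\bigcup_{i\in[\alpha]}\{\{u_i,w_i\}\}$ of $\alpha$ pairwise-disjoint two-element sets, so that $|\bigcup X|=2\alpha$, with the property that every transversal $Z$ — every set meeting each pair $\{u_i,w_i\}$ in at least one vertex — is a resolving set of $G$. The key reduction is that Maker need only claim at least one vertex from each of the $\alpha$ pairs: his resulting set then contains a transversal, and since adjoining further vertices to a resolving set leaves it resolving (the metric code map only gains coordinates), Maker's full holdings form a resolving set and he wins.

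The strategy I would have Maker follow is the reactive pairing rule. After each of Breaker's moves, Maker responds as follows: if Breaker has just claimed a vertex lying in some pair $\{u_i,w_i\}$ whose partner is still unclaimed, Maker claims that partner; otherwise (Breaker played outside $\bigcup X$, or in a pair already met by Maker) Maker claims an arbitrary free vertex. In the $M$-game Maker spends his first move arbitrarily and then applies the reactive rule from that point on; an extra move can only help Maker, so it is harmless, and the same rule governs the $B$-game, where Maker simply responds to each Breaker move in turn.

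The heart of the argument is the invariant that \emph{Breaker never owns both vertices of a single pair}, which I would prove by tracking Breaker's moves. Suppose for contradiction that at some point Breaker claims the second vertex, say $u_i$, of a pair $\{u_i,w_i\}$ both of whose vertices he owns. At the earlier turn on which Breaker claimed $w_i$, the partner $u_i$ was still free (Breaker is claiming it only now), so the reactive rule forced Maker to claim $u_i$ immediately after — contradicting that $u_i$ is free at the present move. This same argument shows that the response demanded by the rule is always legal: whenever Maker is directed to take the partner of Breaker's move, that partner is indeed free.

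Finally, since every vertex of $G$ is eventually chosen in a Maker-Breaker game, at the end each pair vertex is owned by one of the two players; the invariant forces Breaker to own at most one vertex per pair, so Maker owns at least one vertex of each pair. Hence Maker's final set contains a transversal of $X$, is a resolving set, and Maker wins both the $M$-game and the $B$-game, giving $O_R(G)=\mathcal{M}$. The main point requiring care — and the only real obstacle — is precisely the legality of the pairing response, namely that the partner vertex is free exactly when the rule directs Maker to take it; this is secured by the invariant above, which in turn relies on the pairs being disjoint, guaranteed by $|\bigcup X|=2\alpha$.
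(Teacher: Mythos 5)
Your proof is correct and takes essentially the same approach as the source: the paper states this observation without proof, citing \cite{mbrg}, where the argument is precisely your reactive pairing strategy (Maker answers any Breaker move inside a pair by claiming its free partner, so Maker's final set contains a transversal of the pairing and hence a resolving set, in both the $M$-game and the $B$-game). Your attention to the legality of the pairing response and to the disjointness of the pairs supplies exactly the details the standard argument relies on.
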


\begin{observation}\label{pair_resolving}
Let $k\in\mathbb{Z}^+$, and let $G$ be a connected graph of order at least two.
\begin{itemize}
\item[(a)] If $G$ admits a pairing distance-$k$ resolving set, then $O_{R,k}(G)=\mathcal{M}$.
\item[(b)] If $G$ admits a quasi--pairing distance-$k$ resolving set, then $O_{R,k}(G)\in\{\mathcal{M},\mathcal{N}\}$.
\end{itemize}
\end{observation}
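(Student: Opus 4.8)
The plan is to prove both parts by a pairing strategy for Maker, in direct analogy with Observation~\ref{pair_resolving_mbrg}, replacing ``resolving set'' by ``distance-$k$ resolving set'' throughout. The structural fact I will use repeatedly is that the family of distance-$k$ resolving sets is upward closed: if $S$ is a distance-$k$ resolving set and $S\subseteq S'$, then $S'$ is also one, since $S\cap R_k\{x,y\}\neq\emptyset$ forces $S'\cap R_k\{x,y\}\neq\emptyset$ for all distinct $x,y\in V(G)$. Thus it will suffice, in each case, to guarantee that Maker's claimed vertices contain a transversal of the given pairing (together with $v$, in part (b)); note that since the pairs are disjoint and $Z$ has size $\alpha$ meeting each of the $\alpha$ pairs, a transversal is exactly a choice of one vertex from each pair.

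For part (a), let $X=\bigcup_{i\in[\alpha]}\{\{u_i,w_i\}\}$ be a pairing distance-$k$ resolving set. I will first show Maker wins the $B$-game (Breaker first) via the pairing strategy: whenever Breaker claims a vertex $x$ lying in a pair whose partner is still free, Maker answers by claiming that partner; otherwise Maker plays an arbitrary free vertex. I will verify by induction the invariant that, after each Maker move, every pair either has both vertices free or has at least one vertex owned by Maker; in particular Breaker never owns both vertices of a pair. When the game ends no vertex is free, so the invariant forces Maker to own at least one vertex of every pair, i.e.\ a transversal $Z$. Since $Z$ is a distance-$k$ resolving set and Maker's set contains $Z$, upward closure gives the win. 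Finally, because there is no advantage to the second player in these Maker-Breaker games, a win in the $B$-game yields a win in the $M$-game as well, so $O_{R,k}(G)=\mathcal{M}$.

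For part (b), it suffices to rule out $\mathcal{B}$, i.e.\ to show Maker wins the $M$-game (Maker first); the remaining possibilities are then exactly $\mathcal{M}$ and $\mathcal{N}$. Let $X$ be a quasi-pairing distance-$k$ resolving set with associated vertex $v\in V(G)-\bigcup X$. Maker's strategy is to claim $v$ on his first move and thereafter run the part-(a) pairing strategy on $X$ as the responder. Since $v$ lies in no pair, claiming it does not disturb the pairs, and after Maker's opening move the configuration on $X$ is exactly the starting position of the part-(a) $B$-game; hence the same invariant guarantees Maker ends up owning a transversal $Z$ of $X$. Then Maker's claimed set contains $Z\cup\{v\}$, which is a distance-$k$ resolving set by the definition of a quasi-pairing distance-$k$ resolving set, and upward closure finishes the win.

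The main obstacle is the careful verification of the pairing-strategy invariant in part (a): confirming that Breaker can never seize both vertices of a pair, and that the invariant survives Maker's forced ``arbitrary'' moves as well as the parity of the final move (so that, at termination, no pair is left with a free vertex that Maker does not already dominate). Everything else --- the upward closure of distance-$k$ resolving sets, the first-versus-second-player reduction, and prepending $v$ in part (b) --- is routine once that invariant is in hand.
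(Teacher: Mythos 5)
Your proof is correct and is precisely the argument the paper intends: the statement is left as an unproved Observation because it follows by the standard pairing strategy carried over verbatim from Observation~\ref{pair_resolving_mbrg} of~\cite{mbrg}, which is exactly what you execute (upward closure of distance-$k$ resolving sets, the responder invariant, the second-player-to-first-player reduction, and prepending $v$ in part (b)). Your verification of the invariant, including the parity point that Breaker's move in a both-free pair never ends the game, is sound, so nothing is missing.
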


Now, we present the main theorem of this section, which is a collection of realization results (to wit, examples) which shed much light on MB$k$RG, viewed as a family of $(\diam(G)-1)$ resolving games played out on a fixed graph $G$; in other words, viewing $O_{R,k}(G)$ as a function of $k$.

\begin{theorem}\label{ex_mbrg,mbtrg}
Let $k\in\mathbb{Z}^+$.
\begin{itemize}
\item[(a)] There exist graphs $G$ satisfying $O_{R,k}(G)=\mathcal{M}$ for $k\ge1$.
\item[(b)] There exist graphs $G$ satisfying $O_{R,k}(G)=\mathcal{N}$ for $k\ge1$.
\item[(c)] There exist graphs $G$ satisfying $O_{R,k}(G)=\mathcal{B}$ for $k\ge1$.
\item[(d)] There exists a graph $G$ such that $O_{R,1}(G)=\mathcal{N}$ and $O_{R,k}(G)=\mathcal{M}$ for $k\ge2$.
\item[(e)] There exist graphs $G$ such that $O_{R,1}(G)=\mathcal{B}$ and $O_{R,k}(G)=\mathcal{N}$ for $k\ge2$.
\item[(f)] There exist graphs $G$ such that $O_{R,1}(G)=\mathcal{B}$ and $O_{R,k}(G)=\mathcal{M}$ for $k\ge2$.
\item[(g)] There exist graphs $G$ such that $O_{R,1}(G)=\mathcal{B}$, $O_{R,2}(G)=\mathcal{N}$ and $O_{R,k}(G)=\mathcal{M}$ for $k\ge3$.
\end{itemize}
\end{theorem}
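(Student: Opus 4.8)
The plan is to prove each part by exhibiting an explicit graph (or infinite family) and verifying the claimed outcome for every relevant value of $k$. The natural strategy for the ``constant'' parts (a)--(c) is to lean on the structural tools already assembled: for part (c), any graph with a twin equivalence class of cardinality at least $4$ gives $O_{R,k}(G)=\mathcal{B}$ for all $k$ by Corollary~\ref{mbtrg_twin}(a), so a star $K_{1,m}$ with $m\ge 4$ (whose leaves form one twin class) should suffice. For part (a), the goal is a graph admitting a pairing distance-$k$ resolving set for every $k$; by Observation~\ref{pair_resolving}(a) this forces $O_{R,k}(G)=\mathcal{M}$. Since small-diameter graphs collapse the whole family (Corollary~\ref{mbrg_mbtrg}(a)), a clean choice is a graph of diameter $\le 2$ that admits a pairing resolving set, reducing the claim to a single check at $k=\diam(G)-1=1$. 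Part (b) similarly wants a fixed-outcome-$\mathcal{N}$ graph; the cleanest route is a diameter-$2$ graph built from twin classes so that Proposition~\ref{mbrg_twin}(c) applies, forcing $O_R(G)=\mathcal{N}$, and then Corollary~\ref{mbrg_mbtrg}(a) propagates $\mathcal{N}$ to all $k$.

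The genuinely interesting work is in the transition parts (d)--(g), where I must engineer a graph whose truncated metric changes the game's outcome as $k$ grows. The governing principle is that increasing $k$ can only help Maker (Proposition~\ref{monotonicity_2}), and the mechanism is that pairs of vertices indistinguishable under $d_k$ become distinguishable under $d_{k+1}$, shrinking $\dim_k(G)$ and enlarging the sets $R_k\{x,y\}$ that Maker can target. So for each transition I would attach long pendant paths or threads to a small core: when $k$ is small, the $k$-truncation flattens distances far out on these paths, creating many mutually unresolved vertices (pushing toward $\mathcal{B}$ or $\mathcal{N}$); once $k$ is large enough to ``see'' the differing path-lengths, those vertices resolve and Maker gains the winning structure. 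For part (g) in particular, which is advertised in the abstract as the showcase example, I expect to need a carefully balanced core admitting a \emph{quasi}-pairing distance-$k$ resolving set at exactly $k=2$ (yielding $O_{R,2}(G)\in\{\mathcal{M},\mathcal{N}\}$ by Observation~\ref{pair_resolving}(b), with a separate argument ruling out $\mathcal{M}$), a twin/dimension obstruction at $k=1$ forcing $\mathcal{B}$ via Observation~\ref{obs_dimK}(b), and a genuine pairing distance-$k$ resolving set for $k\ge 3$.

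Concretely, the verification for each graph splits into three independent tasks: (i) compute $d_k(x,y)$ for all pairs, or at least classify which pairs are unresolved by a given candidate set, as a function of $k$; (ii) establish the lower-outcome claim (that Breaker, or the first player, wins for small $k$), typically by the counting bounds of Observation~\ref{obs_dimK} or by the twin-class results, or by exhibiting an explicit Breaker pairing strategy that keeps some adjacent twin-like pair unresolved; and (iii) establish the upper-outcome claim (that Maker wins for large $k$) by producing an explicit pairing distance-$k$ resolving set and invoking Observation~\ref{pair_resolving}(a). Tasks (i) and (iii) are essentially bookkeeping on the chosen graph; the delicate point, and the main obstacle, is task (ii) at the \emph{threshold} value of $k$ where the outcome is exactly $\mathcal{N}$. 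Proving $\mathcal{N}$ rather than $\mathcal{M}$ or $\mathcal{B}$ requires showing simultaneously that the first player wins and that the second player does not, which the blunt counting bounds cannot deliver on their own; here I would construct an explicit first-player move that forces a win together with a pairing-type defensive strategy for whoever moves second, and this interplay is where the construction must be tuned most carefully.
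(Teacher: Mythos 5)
Your treatment of parts (a)--(c) is sound, and for (a) and (b) it is a genuinely different (and arguably slicker) route than the paper's: the paper builds spiders (subdivided stars $K_{1,\alpha}$) and exhibits explicit pairing, respectively quasi-pairing, distance-$k$ resolving sets valid for \emph{every} $k$, whereas you collapse the whole family to a single check via $\diam(G)\le 2$ and Corollary~\ref{mbrg_mbtrg}(a) --- e.g.\ $C_4$ or the Petersen graph for (a), and $K_{1,3}$ via Proposition~\ref{mbrg_twin}(c) for (b); part (c) coincides with the paper's $K_{1,\beta}$, $\beta\ge4$, by Corollary~\ref{mbtrg_twin}(a). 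The diameter-$2$ shortcut buys you immunity from having to reason about $d_k$ at all, at the cost of not illustrating the truncated metric; both are complete arguments.

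The genuine gap is in (d)--(g): the theorem is a realization statement, so the constructions \emph{are} the proof, and you never produce one. You correctly name every tool the paper uses --- Observation~\ref{obs_dimK}(b) counting for $\mathcal{B}$ at $k=1$, quasi-pairing sets plus Observation~\ref{pair_resolving}(b) at the $\mathcal{N}$ threshold (with an explicit $B$-game strategy for Breaker, seizing two of three twin leaves, to rule out $\mathcal{M}$), and pairing sets plus Observation~\ref{pair_resolving}(a) for $\mathcal{M}$ at large $k$ --- but naming the tools without a witness graph and its verification leaves the heart of the theorem unproven. Moreover, your guiding mechanism of ``long pendant paths'' is not how the paper realizes (d)--(f): it uses \emph{shallow} caterpillars (a spine $v_1,\dots,v_\alpha$ with two leaves per spine vertex, plus a third leaf at one end for (e)), where the point is that $d_1$ assigns identical codes to leaf pairs hanging at \emph{different} spine vertices, since $R_1\{\ell'_i,\ell'_j\}=\{v_i,\ell'_i,v_j,\ell'_j\}$; this blows $\dim_1$ past $\lceil n/2\rceil$, triggering the counting bound, while at $k=2$ the spine distances become visible and the twin pairs form a pairing set. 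For (g) (Figure~\ref{fig_outcome_transition2}) the counting bound is \emph{not} available at $k=1$; the paper instead has Breaker follow a pairing strategy in the $M$-game, securing one vertex of each pair $\{\ell_i,\ell'_i\},\{s_i,s'_i\}$ together with some $x_j$, which blocks Maker because $R_1\{\ell'_j,s'_j\}=\{\ell'_j,s'_j,x_j\}$; the staged transition is then engineered by the tail $y,z$, with $y$ entering $R_2\{\ell'_i,s'_i\}$ and $z$ entering $R_k\{\ell'_i,s'_i\}$ only for $k\ge3$, so that $\{\{y,z\}\}\cup W$ is a pairing distance-$k$ resolving set exactly from $k=3$ on. This is precisely the ``carefully balanced core'' you predicted would be needed --- but predicting it is not constructing it, so as written parts (d)--(g) remain unestablished.
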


\begin{proof}
(a) Let $G$ be a tree obtained from $K_{1, \alpha}$, where $\alpha\ge3$, by subdividing exactly $(\alpha-2)$ edges once. Let $\ell_1, \ell_2, \ldots, \ell_{\alpha}$ be the leaves and $v$ be the major vertex of $G$ such that $d(v, \ell_{\alpha-1})=d(v, \ell_{\alpha})=1$ and $d(v, \ell_i)=2$ for each $i\in[\alpha-2]$, and let $s_i$ be the degree-two vertex lying on the $v-\ell_i$ path for each $i\in[\alpha-2]$. Since $\{\{\ell_{\alpha-1}, \ell_{\alpha}\}\}\cup(\cup_{i=1}^{\alpha-2}\{\{s_i, \ell_i\}\})$ is a pairing distance-$k$ resolving set of $G$ for each $k\in\mathbb{Z}^+$, $O_{R,k}(G)=\mathcal{M}$ for all $k\ge1$ by Observation~\ref{pair_resolving}(a).

(b) Let $G$ be a tree obtained from $K_{1,\alpha}$, where $\alpha\ge4$, by subdividing exactly $(\alpha-3)$ edges once. Let $v$ be the major vertex of $G$ and let $\{\ell_1, \ell_2, \ldots, \ell_{\alpha}\}$ be the set of leaves of $G$ such that $d(v, \ell_{\alpha-2})=d(v, \ell_{\alpha-1})=d(v, \ell_{\alpha})=1$ and $d(v, \ell_i)=2$ for each $i\in[\alpha-3]$; further, for each $i\in[\alpha-3]$, let $s_i$ be the degree-two vertex lying on the $v-\ell_i$ path in $G$. We note that, for any distance-$k$ resolving set $R$ of $G$, $|R \cap \{ \ell_{\alpha-2}, \ell_{\alpha-1},\ell_{\alpha}\}|\ge2$ by Observation~\ref{obs_twin}(b). Let $X=\{\{\ell_{\alpha}, \ell_{\alpha-1}\}\} \cup (\cup_{i=1}^{\alpha-3}\{\{s_i, \ell_i\}\})$ and let $Z\subseteq V(G)$ with $|Z|=\alpha-2$ such that $Z \cap \{\ell_{\alpha}, \ell_{\alpha-1}\} \neq\emptyset$ and $Z \cap \{s_i, \ell_i\} \neq\emptyset$ for each $i\in[\alpha-3]$. Since $Z\cup\{\ell_{\alpha-2}\}$ is a distance-$k$ resolving set of $G$ and $Z$ fails to form a distance-$k$ resolving set of $G$, $X$ is a quasi-pairing distance-$k$ resolving set of $G$. By Observation~\ref{pair_resolving}(b), $O_{R,k}(G)\in\{\mathcal{M}, \mathcal{N}\}$. In the $B$-game, $B^*$ can select two vertices in $\{\ell_{\alpha}, \ell_{\alpha-1}, \ell_{\alpha-2}\}$ after her second move; thus, $B^*$ wins the $B$-game. Therefore, $O_{R,k}(G)=\mathcal{N}$ for all $k\ge1$.

(c) Let $G=K_{1,\beta}$, where $\beta\ge4$, be the star on ($\beta+1$) vertices such that $L(G)=\cup_{i=1}^{\beta}\{\ell_i\}$ is the set of leaves of $G$. Since $L(G)$ is a twin equivalence class of cardinality $\beta\ge4$, $O_{R,k}(G)=\mathcal{B}$ for all $k\ge1$ by Corollary~\ref{mbtrg_twin}(a).

(d) Let $G$ be a tree obtained from a $3$-path given by $v_1, v_2, v_3$ by joining exactly two leaves $\ell_i$ and $\ell'_i$ to each $v_i$, where $i\in[3]$. First, we show that $O_{R,1}(G)=\mathcal{N}$. Let $S$ be any distance-$1$ resolving set of $G$; for each $i\in[3]$, $S \cap \{\ell_i, \ell'_i\} \neq \emptyset$ by Observation~\ref{obs_twin}(b) since $\ell_i$ and $\ell'_i$ are twins in $G$. We may assume that $S_0=\{\ell_1, \ell_2, \ell_3\} \subseteq S$ by relabeling the vertices of $G$ if necessary. We note that, for any distinct $i, j\in[3]$, $\code_{S_0,1}(\ell'_i)=\code_{S_0, 1}(\ell'_j)$ and $R_1\{\ell'_i, \ell'_j\}=\{v_i, \ell'_i, v_j, \ell'_j\}$; thus, $S \cap \{v_i, \ell'_i, v_j, \ell'_j\} \neq \emptyset$. So, $|S| \ge5$, and thus $\dim_1(G)\ge5$. Let $X=\{\{v_2, v_3\}\} \cup (\cup_{i=1}^{3} \{\{\ell_i,\ell'_i\}\})$ and let $Z \subseteq V(G)$ with $|Z|=4$ such that $Z \cap\{v_2, v_3\}\neq\emptyset$ and $Z \cap\{\ell_i, \ell'_i\}\neq\emptyset$ for each $i\in[3]$. Since $Z\cup\{v_1\}$ is a minimum distance-$1$ resolving set of $G$, $X$ is a quasi-pairing distance-$1$ resolving set of $G$. In the $B$-game of the MB$1$RG, $B^*$ wins since $M^*$ can occupy at most 4 vertices in the course of the game and $\dim_1(G)=5$. Thus, $O_{R,1}(G)=\mathcal{N}$ by Observation~\ref{pair_resolving}(b).

Second, we show that $O_{R,k}(G)=\mathcal{M}$ for all $k\ge2$. Since $\cup_{i=1}^{3}\{\{\ell_i, \ell'_i\}\}$ is a pairing distance-$k$ resolving set of $G$ for all $k\ge2$, $O_{R,k}(G)=\mathcal{M}$ for all $k\ge2$ by Observation~\ref{pair_resolving}(a).

(e) Let $G$ be a tree obtained from an $\alpha$-path $v_1, v_2, \ldots, v_{\alpha}$, where $\alpha\ge3$, by attaching exactly three leaves $\ell_{\alpha}, \ell'_{\alpha}, \ell''_{\alpha}$ to $v_{\alpha}$ and attaching exactly two leave $\ell_i$ and $\ell'_i$ to each $v_i$, where $i\in[\alpha-1]$.

First, we show that $O_{R,1}(G)=\mathcal{B}$. Let $S$ be any distance-$1$ resolving set of $G$. By Observation~\ref{obs_twin}(b), $|S \cap \{\ell_{\alpha}, \ell'_{\alpha}, \ell''_{\alpha}\}|\ge2$ and $|S \cap \{\ell_i, \ell'_i\}|\ge1$ for each $i\in[\alpha-1]$. By relabeling the vertices of $G$ if necessary, we may assume that $S_0= (\cup_{i=1}^{\alpha-1}\{\ell'_i\}) \cup \{\ell'_{\alpha},\ell''_{\alpha}\}\subseteq S$. Note that, for any distinct $i, j\in[\alpha]$, $\code_{S_0, 1}(\ell_i)=\code_{S_0, 1}(\ell_j)$ and $R_1\{\ell_i, \ell_j\}=\{v_i, \ell_i, v_j, \ell_j\}$; thus, $|S \cap \{v_i, \ell_i, v_j, \ell_j\}|\ge1$. So, $|S| \ge 2\alpha$ and hence $\dim_1(G)\ge 2\alpha$. Since $\dim_1(G)\ge2\alpha\ge\lceil\frac{3\alpha+1}{2}\rceil+1=\lceil\frac{|V(G)|}{2}\rceil+1$ for $\alpha\ge3$, $O_{R,1}(G)=\mathcal{B}$ by Observation~\ref{obs_dimK}(b).

Second, we show that $O_{R,k}(G)=\mathcal{N}$ for all $k\ge2$. Let $k\ge2$. Let $X=\cup_{i=1}^{\alpha}\{\{\ell_i, \ell'_i\}\}$ and let $Z\subseteq V(G)$ with $|Z|=\alpha$ such that $Z \cap \{\ell_i,\ell'_i\}\neq\emptyset$ for each $i\in[\alpha]$. Since $Z \cup \{\ell''_{\alpha}\}$ forms a minimum distance-$k$ resolving set of $G$, $X$ is a quasi-pairing distance-$k$ resolving set of $G$. By Observation~\ref{pair_resolving}(b), $O_{R,k}(G)\in\{\mathcal{M}, \mathcal{N}\}$. Note that, in the $B$-game of the MB$k$RG, $B^*$ has a winning strategy since $B^*$ can occupy two vertices of $\{\ell_{\alpha}, \ell'_{\alpha}, \ell''_{\alpha}\}$ after her second move, and thus preventing $M^*$ from occupying vertices that form a distance-$k$ resolving set of $G$. So, $O_{R,k}(G)=\mathcal{N}$ for all $k\ge2$.

(f) Let $G$ be a tree obtained from an $\alpha$-path $v_1, v_2, \ldots, v_{\alpha}$, where $\alpha\ge4$, by joining exactly two leaves $\ell_i$ and $\ell'_i$ to each $v_i$, where $i\in[\alpha]$.

First, we show that $O_{R,1}(G)=\mathcal{B}$. For each $i\in[\alpha]$ and for any distance-$1$ resolving set $S$ of $G$, $S \cap \{\ell_i, \ell'_i\} \neq \emptyset$ by Observation~\ref{obs_twin}(b). We may assume that $S^*=\cup_{i=1}^{\alpha}\{\ell_i\} \subseteq S$ by relabeling the vertices of $G$ if necessary. We note that, for any distinct $i, j\in[\alpha]$, $\code_{S^*,1}(\ell'_i)=\code_{S^*, 1}(\ell'_j)$ and $R_1\{\ell'_i, \ell'_j\}=\{v_i, \ell'_i, v_j, \ell'_j\}$; thus, $S \cap \{v_i, \ell'_i, v_j, \ell'_j\} \neq \emptyset$. So, $|S| \ge 2\alpha-1$, and thus $\dim_1(G)\ge 2\alpha-1$. Since $\dim_1(G)\ge2\alpha-1\ge\lceil\frac{3\alpha}{2}\rceil+1=\lceil\frac{|V(G)|}{2}\rceil+1$ for $\alpha\ge4$, $O_{R,1}(G)=\mathcal{B}$ by Observation~\ref{obs_dimK}(b).

Second, we show that $O_{R,k}(G)=\mathcal{M}$ for all $k\ge2$. Since $\cup_{i=1}^{\alpha}\{\{\ell_i, \ell'_i\}\}$ is a pairing distance-$2$ resolving set of $G$, $O_{R,2}(G)=\mathcal{M}$ by Observation~\ref{pair_resolving}(a). By Proposition~\ref{monotonicity_2}, $O_{R,k}(G)=\mathcal{M}$ for all $k\ge2$.~\hfill

(g) Let $G$ be the graph in Figure~\ref{fig_outcome_transition2}, where $\alpha\ge2$. We note the following: (i) for any minimum distance-$k$ resolving set $S$ of $G$ and for each $i\in[\alpha]$, $S \cap \{\ell_i, \ell'_i\}\neq\emptyset$ and $S \cap\{s_i, s'_i\}\neq\emptyset$ by Observation~\ref{obs_twin}(b); (ii) if $S_0=\cup_{i=1}^{\alpha}\{\ell_i, s_i\} \subseteq S$, then $\code_{S_0,k}(\ell'_j)=\code_{S_0,k}(s'_j)$ for each $k\ge1$ and for each $j\in[\alpha]$; (iii) for each $i\in[\alpha]$, $R_1\{\ell'_i, s'_i\}=\{\ell'_i, s'_i, x_i\}$; (iv) for each $i\in[\alpha]$, $R_2\{\ell'_i, s'_i\}=\{\ell'_i, s'_i,x_i,y\}$; (v) for each $i\in[\alpha]$ and for each $k\ge3$, $R_k\{\ell'_i, s'_i\}\supseteq\{\ell'_i, s'_i, x_i, y,z\}$. Let $W=\cup_{i=1}^{\alpha}\{\{\ell_i, \ell'_i\}, \{s_i, s'_i\}\}$.

First, we show that $O_{R,1}(G)=\mathcal{B}$. In the $M$-game, $B^*$ can choose exactly one vertex of each pair in $W$ and at least one vertex in $\cup_{i=1}^{\alpha}\{x_i\}$. By relabeling the vertices of $G$ if necessary, we may assume that $B^*$ chose the vertices in $\{x_j\}\cup(\cup_{i=1}^{\alpha}\{\ell'_i, s'_i\})$ after her $(2\alpha+1)$st move. Since all vertices in $\{\ell'_j, s'_j, x_j\}$, for some $j\in[\alpha]$, are occupied by $B^*$, (iii) implies that $M^*$ fails to occupy vertices that form a distance-$1$ resolving set of $G$ in the $M$-game of the MB$1$RG. Thus, $O_{R,1}(G)=\mathcal{B}$.

Second, we show that $O_{R,2}(G)=\mathcal{N}$. Let $U\subseteq V(G)$ with $|U|=2\alpha$ such that $U \cap \{\ell_i, \ell'_i\} \neq\emptyset$ and $U \cap \{s_i, s'_i\} \neq\emptyset$ for each $i\in[\alpha]$. Since $U\cup\{y\}$ is a minimum distance-$2$ resolving set of $G$, $W$ is a quasi-pairing distance-$2$ resolving set of $G$. By Observation~\ref{pair_resolving}(b), $O_{R,2}(G)\in\{\mathcal{M}, \mathcal{N}\}$. In the $B$-game, $B^*$ can select the vertex $y$ and exactly one vertex of each pair in $W$; we may assume that $B^*$ chose the vertices in $\{y\} \cup (\cup_{i=1}^{\alpha}\{\ell'_i, s'_i\})$ after her $(2\alpha+1)$st move. In order for $M^*$ to occupy vertices that form a distance-$2$ resolving set of $G$ in the $B$-game, (iv) implies that $M^*$ must select all vertices in $\cup_{i=1}^{\alpha}\{x_i\}$ in addition to the vertices in $\cup_{i=1}^{\alpha}\{\ell_i, s_i\}$, but this is impossible since $\alpha\ge2$ and $B^*$ can select at least one vertex in $\cup_{i=1}^{\alpha}\{x_i\}$ in her $(2\alpha+2)$nd move. So, $B^*$ wins the $B$-game of the MB$2$RG. Thus, $O_{R,2}(G)=\mathcal{N}$.

Third, we show that $O_{R,k}(G)=\mathcal{M}$ for $k\ge3$. Since $\{\{y,z\}\} \cup W$ is a pairing distance-$k$ resolving set of $G$ for $k\ge3$, $O_{R,k}(G)=\mathcal{M}$ for all $k\ge3$ by Observation~\ref{pair_resolving}(a).~\hfill\end{proof}

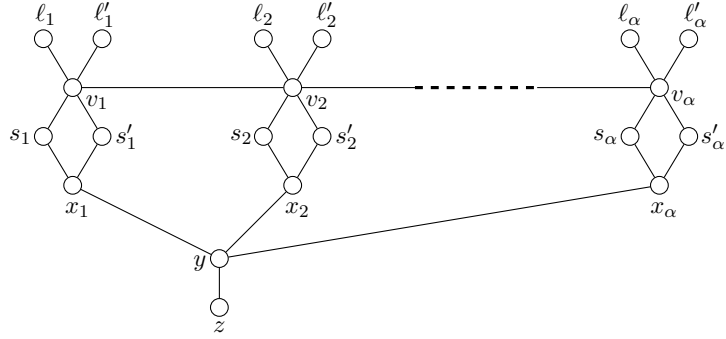
\begin{figure}[ht]
\centering
\begin{tikzpicture}[scale=.65, transform shape]

\node [draw, shape=circle, scale=1] (v1) at  (0,0) {};
\node [draw, shape=circle, scale=1] (v2) at  (4.5,0) {};
\node [draw, shape=circle, scale=1] (v3) at  (12,0) {};

\node [draw, shape=circle, scale=1] (a1) at  (-0.6,1) {};
\node [draw, shape=circle, scale=1] (a2) at  (0.6,1) {};
\node [draw, shape=circle, scale=1] (a3) at  (-0.6,-1) {};
\node [draw, shape=circle, scale=1] (a4) at  (0.6,-1) {};
\node [draw, shape=circle, scale=1] (a5) at  (0,-2) {};

\node [draw, shape=circle, scale=1] (b1) at  (3.9,1) {};
\node [draw, shape=circle, scale=1] (b2) at  (5.1,1) {};
\node [draw, shape=circle, scale=1] (b3) at  (3.9,-1) {};
\node [draw, shape=circle, scale=1] (b4) at  (5.1,-1) {};
\node [draw, shape=circle, scale=1] (b5) at  (4.5,-2) {};

\node [draw, shape=circle, scale=1] (c1) at  (11.4,1) {};
\node [draw, shape=circle, scale=1] (c2) at  (12.6,1) {};
\node [draw, shape=circle, scale=1] (c3) at  (11.4,-1) {};
\node [draw, shape=circle, scale=1] (c4) at  (12.6,-1) {};
\node [draw, shape=circle, scale=1] (c5) at  (12,-2) {};

\node [draw, shape=circle, scale=1] (y) at  (3,-3.5) {};
\node [draw, shape=circle, scale=1] (z) at  (3,-4.5) {};

\draw(v1)--(v2)--(7,0);\draw(9.5,0)--(v3);\draw[very thick, dashed](7,0)--(9.5,0);
\draw(a1)--(v1)--(a2);\draw(v1)--(a3)--(a5)--(a4)--(v1);
\draw(b1)--(v2)--(b2);\draw(v2)--(b3)--(b5)--(b4)--(v2);
\draw(c1)--(v3)--(c2);\draw(v3)--(c3)--(c5)--(c4)--(v3);
\draw(a5)--(y)--(b5);\draw(c5)--(y)--(z);

\node [scale=1.3] at (0.5,-0.3) {$v_1$};
\node [scale=1.3] at (5,-0.3) {$v_2$};
\node [scale=1.3] at (12.5,-0.2) {$v_{\alpha}$};

\node [scale=1.3] at (-0.55,1.5) {$\ell_1$};
\node [scale=1.3] at (0.65,1.5) {$\ell'_1$};
\node [scale=1.3] at (-1.07,-1) {$s_1$};
\node [scale=1.3] at (1.1,-1) {$s'_1$};
\node [scale=1.3] at (0.1,-2.5) {$x_1$};

\node [scale=1.3] at (3.9,1.5) {$\ell_2$};
\node [scale=1.3] at (5.2,1.5) {$\ell'_2$};
\node [scale=1.3] at (3.45,-1) {$s_2$};
\node [scale=1.3] at (5.6,-1) {$s'_2$};
\node [scale=1.3] at (4.6,-2.5) {$x_2$};

\node [scale=1.3] at (11.4,1.5) {$\ell_{\alpha}$};
\node [scale=1.3] at (12.75,1.5) {$\ell'_{\alpha}$};
\node [scale=1.3] at (10.9,-1) {$s_{\alpha}$};
\node [scale=1.3] at (13.1,-1) {$s'_{\alpha}$};
\node [scale=1.3] at (12.1,-2.5) {$x_{\alpha}$};

\node [scale=1.3] at (2.6,-3.6) {$y$};
\node [scale=1.3] at (3,-4.9) {$z$};

\end{tikzpicture}
\caption{\small{Graphs $G$ such that $O_{R,1}(G)=\mathcal{B}$, $O_{R,2}(G)=\mathcal{N}$ and $O_{R,k}(G)=\mathcal{M}$ for $k\ge3$, where $\alpha\ge2$.}}\label{fig_outcome_transition2}
\end{figure}


\section{Time to win or lose the MB$k$RG, and some graph classes}\label{sec_examples}

In this section, we consider the outcome of the MB$k$RG as well as the minimum number of steps needed to reach the outcome of the MB$k$RG on some graph classes. Taking into consideration of Proposition~\ref{monotonicity_2} and Corollary~\ref{mbrg_mbtrg}, we make the following observation.

\begin{observation}\label{obs_time}
Let $k\in\mathbb{Z}^+$ and $G$ be a connected graph of order $n\ge2$.
\begin{itemize}
\item[(a)] If $O_{R,k}(G)=\mathcal{M}$, then $\dim_k(G) \le M_{R,k}(G) \le M'_{R,k}(G) \le \lfloor\frac{n}{2}\rfloor$, $M_{R,k+1}(G)\le M_{R,k}(G)$ and $M'_{R,k+1}(G)\le M'_{R,k}(G)$.
\item[(b)] If $O_{R,k}(G)=\mathcal{B}$, then $B'_{R,k}(G)\le B_{R,k}(G) \le \lfloor\frac{n}{2}\rfloor$; moreover, for $k>1$, $B_{R,k}(G) \ge B_{R,k-1}(G)$ and $B'_{R,k}(G)\ge B'_{R, k-1}(G)$.
\item[(c)] If $\diam(G)\in\{1,2\}$ or $k\ge \diam(G)-1$, then $M_{R,k}(G)=M_R(G)$ and $M'_{R,k}(G)=M'_R(G)$ if $O_R(G)=\mathcal{M}$, and $B_{R,k}(G)=B_R(G)$ and $B'_{R,k}(G)=B'_R(G)$ if $O_R(G)=\mathcal{B}$.
\item[(d)] If $X$ is a pairing distance-$k$ resolving set of $G$ with $|\bigcup X|=2\dim_k(G)$, then $M_{R,k}(G)=M'_{R,k}(G)=\dim_k(G)$.
\item[(e)] If $X$ is a quasi-pairing distance-$k$ resolving set of $G$ with $|\bigcup X|=2(\dim_k(G)-1)$ and $O_{R,k}(G)=\mathcal{N}$, then $N_{R,k}(G)=\dim_k(G)$.
\end{itemize}
\end{observation}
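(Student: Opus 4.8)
The plan is to split the five parts into the \emph{bounding} statements (a)--(c), which fall out of the game's rules combined with the monotonicity of Observation~\ref{monotonicity_1}, and the two \emph{exact} evaluations (d)--(e), which come from explicit pairing strategies; throughout, well-definedness of the parameters is guaranteed by Proposition~\ref{monotonicity_2} and the hypotheses on $O_{R,k}(G)$. For part~(a) I would establish $\dim_k(G)\le M_{R,k}(G)\le M'_{R,k}(G)\le\lfloor n/2\rfloor$ link by link: the first inequality is immediate, since whatever Maker occupies to win is a distance-$k$ resolving set and hence has at least $\dim_k(G)$ vertices, while the last is the move-count refinement of Observation~\ref{obs_dimK}(a), as in the $B$-game Maker moves second, claims at most $\lfloor n/2\rfloor$ vertices in all, and must win within that budget because $O_{R,k}(G)=\mathcal{M}$. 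The $k$-monotonicity inequalities $M_{R,k+1}(G)\le M_{R,k}(G)$ and $M'_{R,k+1}(G)\le M'_{R,k}(G)$ follow by transferring a strategy verbatim: by Observation~\ref{monotonicity_1} a distance-$k$ resolving set is already a distance-$(k+1)$ resolving set, so Maker's optimal MB$k$RG strategy wins MB$(k+1)$RG no later. Part~(b) is the Breaker mirror image: $B_{R,k}(G)\le\lfloor n/2\rfloor$ because Breaker moves second in the $M$-game, while $B_{R,k}(G)\ge B_{R,k-1}(G)$ and $B'_{R,k}(G)\ge B'_{R,k-1}(G)$ use the reverse inclusion — a distance-$(k-1)$ resolving set is a distance-$k$ resolving set, so a Breaker strategy blocking all distance-$k$ resolving sets a fortiori blocks the smaller family of distance-$(k-1)$ ones, hence wins MB$(k-1)$RG no later. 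Part~(c) is the degenerate regime: when $\diam(G)\in\{1,2\}$ or $k\ge\diam(G)-1$ we have $d_k(x,y)=d(x,y)$ for all pairs, so distance-$k$ resolving sets and resolving sets coincide, MB$k$RG \emph{is} MBRG, and every move-count equals its untruncated analogue.

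The one genuinely delicate point is the pair of ``moving first costs nothing'' inequalities $M_{R,k}(G)\le M'_{R,k}(G)$ and $B'_{R,k}(G)\le B_{R,k}(G)$, and this is where I expect the only real work. The naive recipe — make an arbitrary opening move and then steal the second-player strategy — wastes a move and yields only an off-by-one bound. To get the sharp inequality I would have the first player run his second-player strategy $\sigma'$ against a \emph{phantom} opening move $b_0$ of the opponent: for Maker, set $M_1=\sigma'(b_0)$ and thereafter answer each real opponent move $b_i$ by $\sigma'(b_0,b_1,\dots,b_i)$. Maker's claimed set after his $t$th real move then coincides with his claimed set after $t$ moves of the imagined $B$-game, so it is a distance-$k$ resolving set by $t=M'_{R,k}(G)$. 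The bookkeeping to get right is legality: one maintains the invariant that the real opponent's claimed set is always contained in the imagined opponent's (which carries the extra $b_0$), substituting a fresh phantom vertex should the real opponent ever probe $b_0$ itself, and this containment keeps every prescribed reply unclaimed. The Breaker inequality is the same construction with a phantom Maker move, using that Breaker's objective — leaving no distance-$k$ resolving set among the unclaimed vertices — only becomes easier as the opponent's claims shrink.

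Finally, parts~(d) and~(e) are settled by optimal strategies read off from the given sets. In part~(d), $|\bigcup X|=2\dim_k(G)$ means $X$ consists of exactly $\dim_k(G)$ pairs; Maker plays the pairing strategy (answer a touched pair with its partner, otherwise open a fresh pair), thereby securing a transversal, hence a distance-$k$ resolving set, after exactly $\dim_k(G)$ of his moves in either game, so the lower bound $\dim_k(G)\le M_{R,k}(G)\le M'_{R,k}(G)$ from part~(a) forces both to equal $\dim_k(G)$. In part~(e), $X$ has $\dim_k(G)-1$ pairs and there is a vertex $v\notin\bigcup X$ completing every transversal to a distance-$k$ resolving set; since $O_{R,k}(G)=\mathcal{N}$, $N_{R,k}(G)$ is computed in the $M$-game, where Maker opens with $v$ and then pairs on $X$, finishing with $\{v\}$ and a transversal after $1+(\dim_k(G)-1)=\dim_k(G)$ moves, matching the lower bound $\dim_k(G)$ that holds because any winning set has at least $\dim_k(G)$ vertices. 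The only subtlety is that the definition of a quasi-pairing distance-$k$ resolving set supplies a \emph{single} such $v$ good for every transversal, which is exactly what licenses Maker to commit to it on his very first move before seeing Breaker's replies.
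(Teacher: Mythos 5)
Your proposal is correct and takes essentially the paper's (implicit) approach: the paper states this as an Observation with no written proof, treating it as a consequence of the game rules together with Observation~\ref{monotonicity_1}, Proposition~\ref{monotonicity_2} and Corollary~\ref{mbrg_mbtrg}, and your arguments --- strategy transfer across $k$ via monotonicity of distance-$k$ resolving sets, the $\lfloor n/2\rfloor$ move-count bounds, the coincidence of MB$k$RG with MBRG once $d_k=d$, and the pairing/quasi-pairing strategies yielding the exact values in (d) and (e) --- are precisely the standard details being elided. Your explicit treatment of the two genuinely nontrivial points (the sharp ``moving first costs nothing'' inequalities via a phantom opponent move, and the fact that the quasi-pairing definition supplies a single vertex $v$ valid for all transversals, licensing Maker's opening move in (e)) is sound and fills in exactly what the paper leaves to the reader.
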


Next, we examine the MB$k$RG on some graph classes. We first consider the Petersen graph and complete multipartite graphs.

\begin{example}\label{example_diam2}
Let $k\in\mathbb{Z}^+$.

(a) Let $\mathcal{P}$ denote the Petersen graph. It was shown in~\cite{mbrg} that $O_R(\mathcal{P})=\mathcal{M}$ and $M_R(\mathcal{P})=3=M'_R(\mathcal{P})$. Since $\diam(\mathcal{P})=2$, Corollary~\ref{mbrg_mbtrg}(a) and Observation~\ref{obs_time}(c) imply that $O_{R,k}(\mathcal{P})=\mathcal{M}$ and $M_{R,k}(\mathcal{P})=3=M'_{R,k}(\mathcal{P})$ for all $k$.

(b) For $m\ge2$, let $G=K_{a_1, a_2, \ldots, a_m}$ be a complete multi-partite graph of order $\sum_{i=1}^{m}a_i$, and let $s$ be the number of partite sets of $G$ consisting of exactly one element. Since $\diam(G)\le2$, Corollary~\ref{mbrg_mbtrg}(a) implies $O_{R,k}(G)=O_R(G)$ for all $k$, and $O_R(G)$ was determined in~\cite{mbrg}. So,
\begin{equation*}
O_{R,k}(G)=\left\{
\begin{array}{ll}
\mathcal{B} & \mbox{if }s\ge4, \mbox{ or }a_i\ge 4 \mbox{ for some }i\in[m],\\
 & \mbox{ or }s=a_i=3 \mbox{ for some }i\in [m],\\
 & \mbox{ or }a_i=a_j=3 \mbox{ for distinct }i,j\in[m],\\
\mathcal{N} & \mbox{if }s=3 \mbox{ and }a_i \le 2 \mbox{ for each }i\in[m],\\
 & \mbox{ or } s\le 2, a_i\le3 \mbox{ for each }i\in[m] \mbox{ and } a_j\!=\!3 \mbox{ for exactly one }j\in[m],\\
\mathcal{M} & \mbox{if } s \le 2 \mbox{ and }a_i \le 2 \mbox{ for each } i\in[m].
\end{array}\right.
\end{equation*}
Moreover, we have the following: (i) if $O_{R,k}(G)=\mathcal{M}$, then $M_{R,k}(G)=M'_{R,k}(G)=\dim(G)$ by Theorem~4.12 of~\cite{mbrg} and Observation~\ref{obs_time}(c); (ii) if $O_{R,k}(G)=\mathcal{B}$, then $B_{R,k}(G)=B'_{R,k}(G)=2$ by Proposition~3.2 of~\cite{mbrg} and Observation~\ref{obs_time}(c); (iii) if $O_{R,k}(G)=\mathcal{N}$, then $N_{R,k}(G)=\dim(G)$ and $N'_{R,k}(G)=2$. To see (iii), we note that if $O_{R,k}(G)=\mathcal{N}$, then $G$ has exactly one twin equivalence class of cardinality $3$, say $Q$, and $G$ admits a quasi-pairing distance-$k$ resolving set, say $X$, with $|\bigcup X|=2(\dim_k(G)-1)$. In the $M$-game, $N_{R,k}(G)=\dim(G)$ by Observations~\ref{obs_dimK_diameter}(a) and~\ref{obs_time}(e). In the $B$-game, $N'_{R,k}(G)=2$ by Observation~\ref{obs_twin}(b) and the fact that $B^*$ can occupy $2$ vertices of $Q$ after her second move.
\end{example}

Next, we consider cycles. It was obtained in~\cite{mbrg} that $O_R(C_n)=\mathcal{M}$ for $n\ge4$. We recall some terminology. Following~\cite{wheel1}, let $M$ be a set of at least two vertices of $C_n$, let $u_i$ and $u_j$ be distinct vertices of $M$, and let $P$ and $P'$ denote the two distinct $u_i-u_j$ paths determined by $C_n$. If either $P$ or $P'$, say $P$, contains only two vertices of $M$ (namely, $u_i$ and $u_j$), then we refer to $u_i$ and $u_j$ as \emph{neighboring vertices} of $M$ and the set of vertices of $V(P)-\{u_i,u_j\}$ as the \emph{gap} of $M$ (determined by $u_i$ and $u_j$). The two gaps of $M$ determined by a vertex of $M$ and its two neighboring vertices of $M$ are called \emph{neighboring gaps}. Note that, $M$ has $r$ gaps if $|M|=r$, where some of the gaps may be empty. The following lemma and its proof are adapted from~\cite{wheel1}.  

\begin{lemma}\label{lem_cycle} 
Let $S \subseteq V(C_n)$, where $n\ge5$. Suppose $S$ satisfies the following two conditions: (1) every gap of $S$ contains at most $3$ vertices, and at most one gap of $S$ contains $3$ vertices; (2) if a gap of $S$ contains at least $2$ vertices, then its neighboring gaps contain at most $1$ vertex. Then $S$ is a distance-$1$ resolving set of $C_n$.
\end{lemma}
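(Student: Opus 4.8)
The plan is to recast the distance-$1$ code as a record of adjacency to $S$ and then prove the resulting code map is injective by analyzing $S$ gap by gap. First I would reduce the problem. Since $d_1(x,y)\in\{0,1,2\}$ with $d_1(x,y)=0$ exactly when $x=y$, every vertex $u\in S$ satisfies $d_1(u,u)=0<d_1(v,u)$ for all $v\neq u$, so $u$ is resolved from every other vertex by $S$ itself. It therefore suffices to show that $\code_{S,1}(\cdot)$ is injective on $V(C_n)\setminus S$. For $v\notin S$ one has $d_1(v,u)\in\{1,2\}$ for every $u\in S$, with $d_1(v,u)=1$ iff $u\in N(v)$; hence the distance-$1$ code of $v\notin S$ is completely determined by $N(v)\cap S$, a set of size at most $2$ because $v$ has exactly two neighbors on the cycle. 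The task thus becomes: show $v\mapsto N(v)\cap S$ is injective on $V(C_n)\setminus S$.

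Next I would read these neighbor-sets off the gap structure. For a gap with neighboring vertices $u_i,u_j\in S$, a direct case check gives three possibilities: the lone vertex of a gap of size $1$ has $N(v)\cap S=\{u_i,u_j\}$; the two end-vertices of a gap of size $\ge 2$ (those adjacent to $u_i$, resp.\ $u_j$) have singleton neighbor-sets $\{u_i\}$, resp.\ $\{u_j\}$; and any vertex adjacent to neither flanking vertex (an interior vertex of a gap of size $\ge 3$) has $N(v)\cap S=\emptyset$. I would also note that the hypotheses force $|S|\ge 2$, since $|S|\le 1$ would make the unique gap contain $\ge n-1\ge 4$ vertices, contradicting~(1); consequently the two flanking vertices of every gap are distinct, and the three code types ($\emptyset$, singleton, pair) are genuinely separated by cardinality. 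In particular the codes attached to distinct vertices of one and the same gap are pairwise distinct, using $u_i\neq u_j$.

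Then I would establish injectivity by showing each realizable neighbor-set is attained by at most one vertex, with the three hypotheses entering one per code type. For the empty set: an interior vertex lives only in a gap of size $\ge 3$, and by~(1) there is at most one such gap and none larger, so at most one vertex has $N(v)\cap S=\emptyset$. For a singleton $\{u_i\}$: such a vertex must be a neighbor of $u_i$ lying in a gap of size $\ge 2$; if both neighbors of $u_i$ were of this form, the two gaps flanking $u_i$ would both contain $\ge 2$ vertices, but these are neighboring gaps of one another through $u_i$, so~(2) forbids this, leaving at most one vertex with code $\{u_i\}$. For a pair $\{u_i,u_j\}$: such a vertex is the single vertex of a size-$1$ gap with neighboring vertices $u_i,u_j$, and two distinct size-$1$ gaps sharing the flanking pair $\{u_i,u_j\}$ would place $u_i$ and $u_j$ at cycle-distance $2$ along both arcs, forcing $n=4$ and contradicting $n\ge 5$. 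Combining this with the within-gap distinctness already noted, the map $v\mapsto N(v)\cap S$ is injective on $V(C_n)\setminus S$, so $S$ is a distance-$1$ resolving set of $C_n$.

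The main obstacle is the bookkeeping that keeps the three code types from colliding across different gaps; once the codes are rewritten as neighbor-sets and sorted by cardinality, each hypothesis neutralizes exactly one potential collision (the singleton case via~(2), the empty case via~(1)), and the only genuinely geometric input is the $n\ge 5$ argument ruling out a repeated pair-code.
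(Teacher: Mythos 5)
Your proof is correct, and it takes a genuinely different route from the paper's. The paper argues pointwise: it splits into cases according to whether a vertex $v\notin S$ lies in a gap of size $1$, $2$, or $3$, selects explicit two-element witness sets inside $S$ such as $W_1=\{u_{i-1},u_{i+2}\}$, computes truncated codes with respect to them, disposes of $n=5$ and $n=6$ by separate enumeration, and invokes condition (2) to place $u_{i-2}$ or $u_{i-3}$ in $S$, using that extra coordinate to separate the one ambiguous pair such as $u_i$ versus $u_{i-2}$. You instead observe that for $v\notin S$ the distance-$1$ code records exactly the trace $N(v)\cap S$, so injectivity off $S$ reduces to showing each of the three set types (empty, singleton, pair) is attained at most once: hypothesis (1) rules out a second empty code (at most one gap of size $3$, none larger), hypothesis (2) rules out a second singleton $\{u_i\}$ (the two gaps flanking $u_i$ are neighboring gaps, so they cannot both have size $\ge 2$), and $n\ge5$ rules out a repeated pair $\{u_i,u_j\}$ (two size-$1$ gaps with the same flanking pair would force $n=4$); vertices of $S$ are handled by the $0$ in their own coordinate. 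What your version buys: no case split on $n$, and a transparent accounting in which each hypothesis is used exactly once, so the conditions are seen to be essentially sharp. What the paper's version buys: its witness-set technique is the one that generalizes, since for $k\ge2$ the truncated code is no longer determined by adjacency to $S$, and the paper explicitly reuses this proof scheme for the distance-$k$ statement (Lemma~\ref{lem_cycle2}); your neighbor-set dictionary is special to $k=1$. One small shared caveat: the gap terminology presupposes $|S|\ge2$, a point you address explicitly by reading condition (1) as excluding $|S|\le1$, whereas the paper leaves it implicit.
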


\begin{proof}
Let $S \subseteq V(C_n)$ satisfy the conditions of the present lemma, where $n\ge5$. Let $C_n$ be given by $u_0, u_1, \ldots, u_{n-1}, u_0$ and let $v\in V(C_n)-S$. Let ${\bf{2}}_{|S|}$ denote the $|S|$-vector with $2$ on each entry of $\code_{S,1}(\cdot)$, and all subscripts in this proof are taken modulo $n$.

First, suppose $v=u_{i}$ belongs to a gap of size $1$ of $S$. Then $W_0=\{u_{i-1}, u_{i+1}\} \subseteq S$ and $\code_{W_0,1}(u_{i})=(1,1) \neq \code_{W_0,1}(u_j)$ for each $u_j\in V(C_n)-(S\cup \{u_i\})$ since $u_j$ cannot be adjacent to both $u_{i-1}$ and $u_{i+1}$ in $C_n$.

Second, suppose $v=u_i$ belongs to a gap of size $2$ of $S$ such that $\{u_i, u_{i+1}\}\cap S=\emptyset$ or $\{u_{i-1}, u_i\}\cap S=\emptyset$, say the former. Let $W_1=\{u_{i-1}, u_{i+2}\} \subseteq S$. If $n=5$, then $\{\code_{W_1, 1}(u_j): u_j \in V(C_5) -W_1\} =\{(1,1),(1,2),(2,1)\}$. So, suppose $n\ge6$. By the condition (2) of the present lemma, $\{u_{i-2},u_{i-3}\} \cap S \neq\emptyset$. If $u_{i-2} \in S$, then $\code_{W_1,1}(u_{i})=(1,2) \neq \code_{W_1,1}(u_j)$ for each $u_j\in V(C_n)-(S\cup \{u_i\})$ since $u_j$ cannot be adjacent to $u_{i-1}$ in $C_n$. If $u_{i-2} \not\in S$ and $u_{i-3} \in S$, then we have the following: (i) $\code_{W_1,1}(u_i)=\code_{W_1,1}(u_{i-2})=(1,2) \neq \code_{W_1,1}(u_j)$ for each $u_j\in V(C_n)-(S\cup \{u_i, u_{i-2}\})$ since $u_j$ is not adjacent to $u_{i-1}$ in $C_n$; (ii) $\code_{S,1}(u_i)\neq \code_{S,1}(u_{i-2})$ since $d_1(u_i, u_{i-3})=2>1=d_1(u_{i-2}, u_{i-3})$.

Third, suppose $v=u_i$ belongs to a gap of size $3$ of $S$. If $\{u_{i-1},u_i, u_{i+1}\} \cap S=\emptyset$, then $\code_{S,1}(u_i)={\bf{2}}_{|S|} \neq \code_{S,1}(u_j)$ for each $u_j\in V(C_n)-(S \cup \{u_i\})$ since $u_j$ is adjacent to at least one vertex of $S$. Now, suppose $\{u_i, u_{i+1}, u_{i+2}\} \cap S=\emptyset$ or $\{u_{i-2}, u_{i-1}, u_i\}\cap S=\emptyset$, say the former; then $W=\{u_{i-1}, u_{i+3}\} \subseteq S$. If $n=5$, then $\{\code_{W, 1}(u_j): u_j \in V(C_5) -W\} =\{(1,2),(2,1), (2,2)\}$. If $n=6$, then $\{\code_{W, 1}(u_j): u_j \in V(C_6) -W\} =\{(1,1),(1,2),(2,1), (2,2)\}$. So, suppose $n\ge7$. Note that $\{u_{i-2}, u_{i-3}\}\cap S \neq\emptyset$ by the condition (2) of the present lemma. If $u_{i-2}\in S$, then $\code_{W,1}(u_i)=(1,2) \neq \code_{W, 1}(u_j)$ for each $u_j\in V(C_n)-(S\cup \{u_i\})$ since $u_j$ is not adjacent to $u_{i-1}$ in $C_n$. If $u_{i-2}\not\in S$ and $u_{i-3} \in S$, then $\code_{W, 1}(u_i)=\code_{W,1}(u_{i-2})=(1,2) \neq \code_{W,1}(u_j)$ for each $u_j\in V(C_n)-(S\cup \{u_i, u_{i-2}\})$ and $d_1(u_i,u_{i-3})=2>1=d_1(u_{i-2}, u_{i-3})$; thus, $\code_{S,1}(u_i)\neq \code_{S,1}(u_j)$ for each $u_j\in V(C_n)-(S\cup \{u_i\})$.~\hfill
\end{proof}

As a generalization of Lemma~\ref{lem_cycle}, we state the following result without providing a detailed proof, where its proof for $k=1$ is given in Lemma~\ref{lem_cycle}. The proof for the converse of Lemma~\ref{lem_cycle2} is provided in~\cite{distKdim}.

\begin{lemma}\label{lem_cycle2}
Let $k\in\mathbb{Z}^+$ and $S \subseteq V(C_n)$, where $n\ge2k+3$. Suppose $S$ satisfies the following two conditions: (1) every gap of $S$ contains at most $(2k+1)$ vertices, and at most one gap of $S$ contains $(2k+1)$ vertices; (2) if a gap of $S$ contains at least $(k+1)$ vertices, then its neighboring gaps contain at most $k$ vertices. Then $S$ is a distance-$k$ resolving set of $C_n$.
\end{lemma}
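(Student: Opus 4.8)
The plan is to generalise, almost verbatim, the case analysis in the proof of Lemma~\ref{lem_cycle} (the $k=1$ instance), replacing the explicit small distances there by the truncated metric and isolating the single mechanism that drives every case. Write $C_n$ as $u_0u_1\cdots u_{n-1}u_0$ with all indices read modulo $n$, and recall that $S$ is a distance-$k$ resolving set exactly when $\code_{S,k}(\cdot)$ is injective on $V(C_n)$. As in Lemma~\ref{lem_cycle}, I only need to separate pairs of vertices lying outside $S$: if $s\in S$ then $\code_{S,k}(s)$ has a $0$ in the coordinate indexed by $s$, whereas every vertex off $S$ has all coordinates at least $1$. So I fix $v=u_i\notin S$ and show that no other vertex shares its code. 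The organising device, which I would prove first, is a \emph{mirror principle}: if $s=u_{i\pm a}\in S$ with $1\le a\le k$, then $d_k(v,s)=a$ and the only two vertices at truncated distance $a$ from $s$ are $v$ and its reflection across $s$. This holds because $a\le k$ forces the truncated value to equal the true distance, and on a cycle a true distance $a<n/2$ (guaranteed by $n\ge 2k+3$) is realised by exactly two vertices. Consequently any vertex that collides with $v$ must be a simultaneous reflection of $v$ across every landmark lying within distance $k$ of $v$.

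First I would dispose of the \emph{saturated} vertices, those with $d_k(v,s)=k+1$ for all $s\in S$, whose code is the constant vector. A vertex $v$ is saturated precisely when it sits at graph-distance at least $k+1$ from both ends of its gap; since the two ends are the nearest landmarks, this forces the gap to have size $a+b-1\ge 2k+1$, hence (by condition~(1), which caps gaps at $2k+1$) the gap has size exactly $2k+1$ and $v$ is its central vertex. Condition~(1) allows at most one gap of size $2k+1$, so at most one vertex is saturated and its constant code is unique. This is the exact analogue of the ``$\mathbf 2_{|S|}$'' sub-case of Lemma~\ref{lem_cycle}.

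For a non-saturated $v=u_i$, let $u_{i-a},u_{i+b}\in S$ be the landmarks bounding its gap, so the gap has size $a+b-1$. If this gap has size at most $k$, then $a,b\le k$, both bounds are in range, and the mirror principle confines the collision set to $\{v,u_{i-2a}\}\cap\{v,u_{i+2b}\}$; these two reflections are distinct since $u_{i-2a}=u_{i+2b}$ would require $n\mid 2(a+b)$, impossible as $2(a+b)\le 2k+2<n$, so $v$ is resolved outright. If instead the gap has size at least $k+1$, then condition~(2) bounds both neighbouring gaps by $k$, and this is the regime I would treat with care. After checking that two in-range landmarks on opposite sides give distinct reflections except in a coincidence that again pushes the gap above $k$, the sole possible collider is a single reflection $m$ of $v$ across a nearest landmark $s$; here $d_k(m,s)\le k$, which is automatically the short arc. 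Using the size-$\le k$ neighbouring gap on the $m$-side I would locate the landmark $s'$ bounding that gap, which lies within true distance $k$ of $m$, and argue that the coordinate indexed by $s'$ records $d_k(m,s')\le k$ against a strictly different value $d_k(v,s')$, separating the two codes. The mirror-image argument with left and right interchanged then finishes the forward direction; the converse is the content of~\cite{distKdim}.

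I expect the genuine work to lie entirely in this last step, namely certifying the separating landmark $s'$, and two pitfalls must be ruled out in each positional sub-case for $m$ (inside the neighbouring gap, coincident with a landmark, or beyond it). First, one must confirm that $d_k(v,s')\neq d_k(m,s')$ even when $d(v,s')$ \emph{wraps} around the short way; the saving fact is that such a wraparound coincidence would force an identity of the form $n=2(g+1)$ with $g\le k$, which $n\ge 2k+3$ forbids, so the hypothesis on $n$ is exactly what kills the antipodal collisions. Second, one must keep $s'$ \emph{within range} of $m$, i.e.\ at true distance at most $k$, so that truncation to $k+1$ does not collapse the intended distinction — and it is condition~(2), capping the relevant neighbouring gap at $k$, that guarantees such an in-range landmark whenever $v$'s own gap is large. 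All remaining comparisons are the same routine truncated-distance inequalities already carried out explicitly for $k=1$ in Lemma~\ref{lem_cycle}.
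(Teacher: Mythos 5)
You should first note what the paper actually does here: Lemma~\ref{lem_cycle2} is stated explicitly \emph{without} a detailed proof (the paper offers only the $k=1$ instance, Lemma~\ref{lem_cycle}, and cites~\cite{distKdim} for the converse), so the only available comparison is with the $k=1$ argument — and your proposal is a correct generalization of exactly that argument, reorganized more conceptually. Your mirror principle is the mechanism implicit in the paper's explicit checks (each sub-case of Lemma~\ref{lem_cycle} pins down the unique potential collider $u_{i-2}$ and separates it with a landmark such as $u_{i-3}$ bounding the neighboring gap), and your saturated-vertex step is the general form of the $\mathbf{2}_{|S|}$ sub-case. I verified that the sub-cases you leave as routine do close: with $a\le k$ the distance from $v=u_i$ to its in-range gap endpoint $s$, $g'\le k$ the size of the neighboring gap beyond $s$ (condition~(2) applies since $v$'s gap has size $\ge k+1$ in this regime), and $s'$ the far endpoint of that gap, the collider $m=u_{i-2a}$ has $d_k(m,s')=g'+1-a\le k$ when $a\le g'$, is itself a landmark when $a=g'+1$ (impossible for a collider), and has $d_k(m,s')=a-g'-1\le k-1$ when $a\ge g'+2$; a code collision at $s'$ then forces $n=2(g'+1)\le 2k+2$ in the first case and $n=2a\le 2k$ in the third, both excluded by $n\ge 2k+3$. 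Two small imprecisions in your write-up, neither fatal: first, in the beyond-$s'$ sub-case the wraparound identity is $n=2a$ with $a\le k$, not one of the form $n=2(g+1)$; second, when both gap endpoints are in range the two reflections \emph{can} coincide (at the antipode, with $n=2(g+1)$ and $g$ the gap size of $v$) even under the hypotheses — your parenthetical ``pushes the gap above $k$'' is the right observation, since $n\ge 2k+3$ forces $g\ge k+1$ in that event, and the coincident reflection is then handled by the same $s'$ argument, but this deserves to be said plainly rather than cryptically. Compared with the paper's $k=1$ proof, your organization buys uniformity: the explicit code enumerations for $C_5$, $C_6$, $C_7$ in Lemma~\ref{lem_cycle} are all absorbed into the single arithmetic exclusion $n\ge 2k+3$, which is what a complete write-up of Lemma~\ref{lem_cycle2} would need anyway.
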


\begin{proposition}\label{mbtrg1_cycle}
Let $k\in\mathbb{Z}^+$ and $n\ge 3$. Then
\begin{equation*}
O_{R,k}(C_n)=\left\{
\begin{array}{ll}
\mathcal{N} & \mbox{if $n=3$ and~}k\ge1,\\
\mathcal{M} & \mbox{if $n\ge4$ is even and~}k\ge1,\\
\mathcal{M} & \mbox{if $n\ge5$ is odd and~}k\ge2.
\end{array}\right.
\end{equation*}
\end{proposition}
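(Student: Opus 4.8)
The plan is to treat the three regimes separately, in each case reducing to a single value of $k$ by the monotonicity of $O_{R,k}$ in $k$ (Proposition~\ref{monotonicity_2}) and then exhibiting an explicit pairing resolving set to which Observation~\ref{pair_resolving}(a) applies. The small cycles are immediate: $\diam(C_3)=1$ and $\diam(C_4)=\diam(C_5)=2$, so Corollary~\ref{mbrg_mbtrg}(a) gives $O_{R,k}(C_n)=O_R(C_n)$ for all $k$ when $n\in\{3,4,5\}$. A direct move-counting argument shows $O_R(C_3)=\mathcal{N}$ (a resolving set of $C_3=K_3$ is any two of its three vertices, and the first player secures two of three vertices in the $M$-game but only one in the $B$-game), while the known value $O_R(C_n)=\mathcal{M}$ for $n\ge4$ settles $n=4$ and $n=5$; note that the latter covers the odd cycle $C_5$ for all $k$, in particular for $k\ge2$.

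For even $n\ge6$, label $C_n$ by $u_0,u_1,\dots,u_{n-1}$ and take the pairing $X=\{\{u_{2i},u_{2i+1}\}:0\le i\le \tfrac{n}{2}-1\}$, which covers $V(C_n)$. I would show that every transversal $Z$ of $X$ is a distance-$1$ resolving set by checking the hypotheses of Lemma~\ref{lem_cycle}: the gap between the chosen vertices of two cyclically consecutive pairs is empty, of size $1$, or of size $2$, and a gap of size $2$ arises only by taking the ``left'' vertex $u_{2i}$ of one pair together with the ``right'' vertex $u_{2i+3}$ of the next. Since two cyclically adjacent size-$2$ gaps would force $Z$ to contain both vertices of the shared pair, no two neighboring gaps are both of size $2$, so conditions (1) and (2) of Lemma~\ref{lem_cycle} hold. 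Hence $X$ is a pairing distance-$1$ resolving set, whence $O_{R,1}(C_n)=\mathcal{M}$ by Observation~\ref{pair_resolving}(a), and Proposition~\ref{monotonicity_2} promotes this to $O_{R,k}(C_n)=\mathcal{M}$ for all $k\ge1$.

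For odd $n=2m+1\ge7$ I would use the same consecutive pairing on $u_0,\dots,u_{2m-1}$ but leave $u_{2m}$ unpaired, i.e. $X=\{\{u_{2i},u_{2i+1}\}:0\le i\le m-1\}$ with $|\bigcup X|=2m$. Since $n\ge 7=2\cdot 2+3$, Lemma~\ref{lem_cycle2} applies with $k=2$. As before, every non-wraparound gap of a transversal $Z$ has size at most $2$; the single gap containing the unchosen leftover $u_{2m}$ has size at most $3$, attained only when $Z$ takes $u_{2m-2}$ and $u_1$, in which case its two neighboring gaps are forced to have size at most $1$. Thus all gaps are $\le 3\le 2k+1=5$, no gap has size $5$, and every gap of size $\ge k+1=3$ has neighboring gaps of size $\le k=2$; conditions (1)--(2) of Lemma~\ref{lem_cycle2} hold, so $Z$ is a distance-$2$ resolving set. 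Hence $X$ is a pairing distance-$2$ resolving set, giving $O_{R,2}(C_n)=\mathcal{M}$ and, by Proposition~\ref{monotonicity_2}, $O_{R,k}(C_n)=\mathcal{M}$ for all $k\ge2$.

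I expect the main obstacle to be the odd case: the unpaired vertex $u_{2m}$ creates a wrap-around gap that can attain the critical size $k+1=3$, so one must verify carefully that whenever that gap is largest its neighbors stay small enough for condition~(2) of Lemma~\ref{lem_cycle2}. This is precisely where taking $k=2$ rather than $k=1$, together with the placement of the leftover vertex, is essential, and it is why the odd-cycle conclusion begins only at $k\ge2$. The remaining steps are routine gap bookkeeping.
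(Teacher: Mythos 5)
Your proposal is correct and follows essentially the same route as the paper: the consecutive-pair pairing verified against the gap conditions of Lemma~\ref{lem_cycle} for even $n$, and the same pairing with one leftover vertex verified against Lemma~\ref{lem_cycle2} with $k=2$ for odd $n$, followed in both cases by Observation~\ref{pair_resolving}(a) and Proposition~\ref{monotonicity_2}. The only (harmless) divergence is in the small cases: you settle $n\in\{3,4,5\}$ via Corollary~\ref{mbrg_mbtrg}(a) together with known MBRG outcomes, whereas the paper uses Example~\ref{example_diam2}(b) for $C_3$, an explicit pairing for $C_4$, and folds $C_5$ into the odd-case argument --- your handling of $C_5$ is in fact slightly tidier, since Lemma~\ref{lem_cycle2} with $k=2$ formally requires $n\ge 2k+3=7$.
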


\begin{proof}
Let $k\in\mathbb{Z}^+$. Let $C_n$ be given by $u_1, u_2, \ldots, u_{n}, u_1$, where $n\ge 3$. Note that $O_{R,k}(C_3)=\mathcal{N}$ by Example~\ref{example_diam2}(b). If $n=4$, then $\{\{u_1, u_3\}, \{u_2,u_4\}\}$ is a pairing distance-$k$ resolving set of $C_4$; thus, $O_{R,k}(C_4)=\mathcal{M}$ by Observation~\ref{pair_resolving}(a). So, suppose $n\ge5$. 

If $n=2x$ ($x\ge3$), let $X=\cup_{i=1}^{x}\{\{u_{2i-1}, u_{2i}\}\}$ and $S$ be the set of vertices that are selected by $M^*$ over the course of the MB$1$RG such that $M^*$ selects exactly one vertex in $\{u_{2i-1}, u_{2i}\}$ for each $i\in[x]$. Then we have the following: (i) every gap of $S$ contains at most $2$ vertices; (ii) if a gap of $S$ contains $2$ vertices, then its neighboring gaps contain at most $1$ vertex. By Lemma~\ref{lem_cycle}, $S$ is a distance-$1$ resolving set of $C_n$. Since $X$ is a pairing distance-$1$ resolving set of $C_n$, $O_{R,1}(C_n)=\mathcal{M}$ by Observation~\ref{pair_resolving}(a). By Proposition~\ref{monotonicity_2}, $O_{R,k}(C_n)=\mathcal{M}$ for even $n\ge6$ and for $k\ge1$.

If $n=2x+1$ ($x\ge2$), let $Y=\cup_{i=1}^{x}\{\{u_{2i-1}, u_{2i}\}\}$. In the $B$-game of the MB$k$RG, suppose $B^*$ selects $u_n=u_{2x+1}$ (by relabeling the vertices of $C_n$ if necessary) after her first move and let $S'$ be the set of vertices that are selected by $M^*$ such that $M^*$ selects exactly one vertex in $\{u_{2i-1}, u_{2i}\}$ for each $i\in[x]$. Then we have the following: (i) at most one gap of $S'$ contains $3$ vertices (when $B^*$ is able to select all vertices in $\{u_1,u_{n-1},u_n\}$ over the course of the MB$k$RG) and all other gaps of $S'$ contain at most $2$ vertices; (ii) if a gap of $S'$ contains $3$ vertices, then its neighboring gaps contain at most $1$ vertex. By Lemma~\ref{lem_cycle2}, $S'$ is a distance-$2$ resolving set of $C_n$. Since $Y$ is a pairing distance-$2$ resolving set of $C_n$, $O_{R,2}(C_n)=\mathcal{M}$ by Observation~\ref{pair_resolving}(a). By Proposition~\ref{monotonicity_2}, $O_{R,k}(C_n)=\mathcal{M}$ for odd $n\ge5$ and for $k\ge2$.~\hfill
\end{proof}

\begin{remark}\label{rem_cycle_57}
Let $C_n$ be given by $u_1, u_2, \ldots, u_n, u_1$. We consider the $B$-game of the MB$1$RG.

(1) We show that $O_{R,1}(C_5)=\mathcal{M}$. Without loss of generality, suppose $B^*$ selects $u_5$ on her first move. Then $M^*$ selects $u_4$ on his first move and exactly one vertex in $\{u_2, u_3\}$ on his second move. Since the vertices selected by $M^*$ form a distance-$1$ resolving set of $C_5$, $O_{R,1}(C_5)=\mathcal{M}$. 

(2) We show that $O_{R,1}(C_7)=\mathcal{M}$. Let $S$ be the set of vertices that are selected by $M^*$ over the course of the game. Without loss of generality, suppose $B^*$ selects $u_7$ on her first move. Then $M^*$ selects $u_6$ on his first move. If $B^*$ selects a vertex in $\{u_1, u_2\}$ on her second move, then $M^*$ selects $u_3$ on his second move and exactly one vertex of $\{u_4, u_5\}$ on his third move. If $B^*$ selects $u_3$ on her second move, then $M^*$ selects $u_4$ on his second move and exactly one vertex of $\{u_1, u_2\}$ on his third move. If $B^*$ selects $u_4$ on her second move, then $M^*$ selects $u_3$ on his second move and exactly one vertex of $\{u_1, u_5\}$ on his third move. If $B^*$ selects $u_5$ on her second move, then $M^*$ selects $u_4$ on his second move and exactly one vertex of $\{u_1,u_2, u_3\}$ on his third move. In each case, $S$ satisfies the conditions of Lemma~\ref{lem_cycle}; thus, $S$ is a distance-$1$ resolving set of $C_7$ and $O_{R,1}(C_7)=\mathcal{M}$. \end{remark}

\begin{conjecture}
In addition to $C_5$ and $C_7$, we find that $O_{R,1}(C_9)=\mathcal{M}$ through explicit computation. We further conjecture that $O_{R,1}(C_n)=\mathcal{M}$ for all odd $n\ge5$, but an argument for the general (odd) $n$ eludes us.
\end{conjecture}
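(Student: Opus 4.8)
The plan is to show Maker wins the $B$-game of the MB$1$RG on $C_n$ for every odd $n\ge5$; since Maker enjoys no disadvantage as the second player, this suffices to conclude $O_{R,1}(C_n)=\mathcal{M}$. Throughout I would use Lemma~\ref{lem_cycle} as the sole winning criterion: Maker wins the moment the set $S$ of vertices he has claimed satisfies its two gap conditions. Writing $C_n$ as $u_1,u_2,\ldots,u_n,u_1$, I may assume by symmetry that Breaker opens at $w=u_n$. The governing idea is to imitate the pairing argument that settles the even case in Proposition~\ref{mbtrg1_cycle}, while repairing the parity defect created by the single unpaired vertex $w$.

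As a first pass, consider the rigid pairing $P_j=\{u_{2j-1},u_{2j}\}$ of the path $C_n-w$. Responding within pairs, Maker secures exactly one vertex of each $P_j$, so $|S|=(n-1)/2$. Exactly as in the even case one checks that every gap internal to the path has size at most $2$, that two internal size-$2$ gaps are never adjacent, and that the unique gap which can reach size $3$ is the one straddling $w$ -- and that when it does reach size $3$ its two neighbors have size at most $1$. Consequently condition~(1) of Lemma~\ref{lem_cycle} holds automatically, and the only way condition~(2) can fail is for the $w$-straddling gap to have size $2$ while abutting an internal size-$2$ gap.

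The obstacle lives precisely at this junction. Because Breaker, by opening inside a pair, forces Maker onto the \emph{other} vertex of that pair, Breaker retains control of the intra-pair choices and can manufacture exactly this configuration. For $n=7$ with $w=u_7$, for instance, Breaker can drive Maker to $S=\{u_1,u_4,u_5\}$, whose gaps $\{u_2,u_3\}$ and $\{u_6,u_7\}$ are adjacent size-$2$ gaps; and one verifies directly that $\code_{S,1}(u_2)=\code_{S,1}(u_7)=(1,2,2)$, so this $S$ is genuinely not a distance-$1$ resolving set. Hence a static pairing cannot win, and Maker must play adaptively.

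The adaptive strategy I would pursue is the one already visible in Remark~\ref{rem_cycle_57}: answer Breaker's opening $w=u_n$ with the neighbor $u_{n-1}$, anchoring one side of the $w$-region, and thereafter play a pairing on the remaining path $u_1,\ldots,u_{n-2}$ that Maker is free to \emph{re-choose} on the fly. The invariant to maintain is that no two gaps of size $\ge2$ ever become neighbors; whenever Breaker's move threatens to create such a pair, Maker replies by claiming the vertex that splits the offending region, using his anchor and his end-of-game slack (exactly as in the $C_7$ case analysis) to absorb the boundary effects near $w$. The main obstacle -- and the reason the statement remains a conjecture -- is to prove that this invariant is maintainable \emph{uniformly in $n$}: that Maker always has a single reply neutralizing Breaker's latest threat without opening a new violation elsewhere on the cycle, and that the bookkeeping at the two ends of the path closes up for every odd $n$. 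An induction decreasing $n$ by $2$, contracting the resolved $w$-region of $C_n$ to recover a game on $C_{n-2}$, is a natural alternative route, but transferring the resolving condition across the contraction is itself the delicate point; I expect this gluing step, rather than any single-move analysis, to be where a general argument must do its real work.
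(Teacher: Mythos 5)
Your write-up does not prove the statement, and you say so yourself; to be fair, nothing in the paper does either. The statement is posed as a \emph{conjecture}: the authors establish $O_{R,1}(C_5)=O_{R,1}(C_7)=\mathcal{M}$ by the adaptive case analysis of Remark~\ref{rem_cycle_57}, report $C_9$ via explicit computation, and explicitly state that a general argument eludes them. Measured against that, your preliminary analysis is accurate and is essentially a faithful reconstruction of the state of play. With the rigid pairing $P_j=\{u_{2j-1},u_{2j}\}$ on $C_n-w$, your gap bookkeeping is correct: internal gaps have size at most $2$ and two internal size-$2$ gaps are never adjacent, the $w$-straddling gap has size at most $3$ and has neighbors of size at most $1$ when it attains $3$, so the unique failure mode of Lemma~\ref{lem_cycle} is a size-$2$ straddling gap abutting an internal size-$2$ gap. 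Your $C_7$ witness also checks out: for $S=\{u_1,u_4,u_5\}$ one indeed gets $\code_{S,1}(u_2)=\code_{S,1}(u_7)=(1,2,2)$, so the static pairing fails outright, not merely relative to the lemma, and Maker must play adaptively --- exactly the pattern of Remark~\ref{rem_cycle_57}.

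The genuine gap is the one you name but do not close: you never specify Maker's reply rule as a function of the position, never formulate your invariant (``no two gaps of size $\ge 2$ become neighbors'') as a precise hypothesis maintained after each round, and never prove it is maintainable uniformly in $n$; the $C_7$ analysis is a finite case check and does not scale by itself. Two further soft spots deserve flagging. First, Lemma~\ref{lem_cycle} is only a \emph{sufficient} criterion, so adopting it as the ``sole winning criterion'' commits Maker to a stronger target than necessary; that is a legitimate design choice, but it means a breakdown of the invariant would not even show Breaker wins, and a general proof might need sets that resolve while violating the gap conditions. Second, your fallback induction $C_n\to C_{n-2}$ is more delicate than a routine reduction: contracting the resolved $w$-region changes $d_1$-codes globally near the contraction, and Breaker's moves on $C_{n-2}$ do not lift canonically to $C_n$, so the gluing step you flag is indeed where the real work lies. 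In short, your diagnosis of the obstruction is correct and coincides with the paper's, but what you have written is an accurate account of why the statement is hard, not a proof of it --- which is precisely why the paper leaves it as a conjecture.
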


Next, we consider wheel graphs. The \emph{join} of two graphs $G$ and $H$, denoted by $G+H$, is the graph obtained from the disjoint union of $G$ and $H$ by adding additional edges between each vertex of $G$ and each vertex of $H$. Since $\diam(G+H)\le2$, $O_{R,k}(G+H)=O_R(G+H)$, for all $k\in\mathbb{Z}^+$, by Corollary~\ref{mbrg_mbtrg}(a). Let $d_G(w_i, w_j)$ denote the distance between the vertices $w_i$ and $w_j$ in a graph $G$, and let $d_{G,k}(w_i, w_j)$ denote $d_k(w_i, w_j)$ in $G$.

For the wheel graph $C_n+K_1$, let $V(K_1)=\{v\}$ and let $C_n$ be given by $u_1, u_2, \ldots, u_n, u_1$ such that $v$ is adjacent to each $u_i$, where $i\in[n]$, in $C_n+K_1$. Let $k\in\mathbb{Z}^+$. If $n=3$, then $O_{R,k}(C_3+K_1)=O_{R,k}(K_4)=\mathcal{B}$ by Proposition~\ref{mbtrg_twin}(a).
If $n=4$, then $\{\{u_1, u_3\}, \{u_2, u_4\}\}$ is a pairing distance-$k$ resolving set of $C_4+K_1$. If $n=5$, then  $\{\{u_1, u_2\}, \{u_3, u_4\}, \{u_5, v\}\}$ is a pairing distance-$k$ resolving set of $C_5+K_1$. So, $O_{R,k}(C_4+K_1)=O_{R,k}(C_5+K_1)=\mathcal{M}$, for all $k\ge1$, by Observation~\ref{pair_resolving}(a). For $n \ge6$, let $S$ be the set of vertices lying on $C_n$ that are selected by $M^*$ over the course of MB$k$RG; notice that $\code_{S,k}(v)=(1,1,\ldots,1)\neq \code_{S,k}(u_i)$ for each $i\in[n]$. Since $d_{C_n+K_1}(u_i, u_j)=d_{C_n,1}(u_i, u_j)$ for $i, j\in [n]$, the same method of proof for Proposition~\ref{mbtrg1_cycle} and Remark~\ref{rem_cycle_57} provides the following result. 

\begin{corollary}
Let $k\in\mathbb{Z}^+$ and $n\ge3$. Then (i) $O_{R,k}(C_3+K_1)=O_{R}(C_3+K_1)=\mathcal{B}$; (ii) if $n\in \{4,5,6,7\}$ or $n\ge8$ is even, then $O_{R,k}(C_n+K_1)=O_{R}(C_n+K_1)=\mathcal{M}$; (iii) if $n\ge9$ is odd, then $O_{R,k}(C_n+K_1)=O_{R}(C_n+K_1) \in \{\mathcal{M}, \mathcal{N}\}$ (because $M^*$ has a winning strategy in the $M$-game of the MB$k$RG).
\end{corollary}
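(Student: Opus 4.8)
The plan is to exploit the two reductions already assembled just before the statement, so that the wheel game collapses onto the cycle game of Proposition~\ref{mbtrg1_cycle} and Remark~\ref{rem_cycle_57}. Since $\diam(C_n+K_1)\le 2$, Corollary~\ref{mbrg_mbtrg}(a) identifies $O_{R,k}(C_n+K_1)$ with $O_R(C_n+K_1)$ for every $k$, so only one game need be analyzed. The degenerate cases are immediate: $C_3+K_1=K_4$ is a single twin equivalence class of size four, so Corollary~\ref{mbtrg_twin}(a) gives $\mathcal{B}$; and for $n\in\{4,5\}$ the explicit pairing distance-$k$ resolving sets $\{\{u_1,u_3\},\{u_2,u_4\}\}$ and $\{\{u_1,u_2\},\{u_3,u_4\},\{u_5,v\}\}$ give $\mathcal{M}$ via Observation~\ref{pair_resolving}(a). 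Thus the substance lies in $n\ge6$.

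For $n\ge6$ I would first record the equivalence driving the reduction: if $S$ is a set of rim vertices with $|S|\ge3$, then $S$ is a distance-$k$ resolving set of $C_n+K_1$ if and only if $S$ is a distance-$1$ resolving set of $C_n$. Indeed $d_{C_n+K_1}(u_i,u_j)=d_{C_n,1}(u_i,u_j)$, so $S$ separates the rim pairs in the wheel exactly when it does so in the $1$-truncated cycle; and $v$ is separated from every $u_i$ automatically, since $\code_{S,k}(v)=(1,1,\ldots,1)$ while $\code_{S,k}(u_i)=(1,\ldots,1)$ would force $S\subseteq\{u_{i-1},u_{i+1}\}$, impossible once $|S|\ge3$. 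With this in hand, $M^*$ plays his cycle strategy on the rim and treats $v$ as a free vertex: whenever $B^*$ claims $v$ (a wasted move from the real game's viewpoint), $M^*$ answers by claiming a rim vertex of a still-untouched pair. For even $n$ the pairing set of Proposition~\ref{mbtrg1_cycle} transports verbatim, securing one vertex of each pair in both the $M$- and $B$-games, and hence $\mathcal{M}$; a short parity count confirms $M^*$ never runs short of productive moves. Together with the explicit $C_5,C_7$ strategies of Remark~\ref{rem_cycle_57} carried over in the same manner, this completes $\mathcal{M}$ on $\{4,5,6,7\}$ and on all even $n\ge8$.

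For odd $n\ge9$ I would establish only that $M^*$ wins the $M$-game, which already excludes $\mathcal{B}$ and yields the outcome in $\{\mathcal{M},\mathcal{N}\}$. Here $M^*$ opens by taking the unique unpaired rim vertex $u_n$ and then runs the pairing strategy on $Y=\cup_{i=1}^{x}\{\{u_{2i-1},u_{2i}\}\}$ (with $n=2x+1$), again treating $v$ as free. The resulting set $\{u_n\}\cup T$, where $T$ is the transversal of $Y$ that $M^*$ acquires, has every gap of size at most $2$, and no two neighboring gaps are both of size $2$: a size-$2$ gap forces the left pair to contribute its first element and the right pair its second, which precludes an adjacent size-$2$ gap, while the presence of $u_n$ keeps the gaps straddling its position down to size $\le1$. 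Lemma~\ref{lem_cycle} then certifies $\{u_n\}\cup T$ as a distance-$1$ resolving set of $C_n$, hence, by the equivalence above (its size being $x+1\ge5$), a distance-$k$ resolving set of $C_n+K_1$.

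The main obstacle is the asymmetry that forces (iii) to stop at $\{\mathcal{M},\mathcal{N}\}$ rather than $\mathcal{M}$: in the $B$-game for odd $n$, $B^*$ can seize $u_n$ on the very first move, and the center $v$ cannot be pressed into service as a replacement, since its code coordinate is constantly $1$ on the rim and so $v$ separates no pair of rim vertices. Without $u_n$, the transversal $T$ alone can exhibit a gap of size $3$ straddling the position of $u_n$, violating Lemma~\ref{lem_cycle}, so $M^*$'s argument genuinely breaks down; this mirrors the still-open status of $O_{R,1}(C_n)$ for general odd $n$ recorded in the conjecture. Beyond this, the only delicate bookkeeping is verifying that the dummy vertex $v$ never disrupts the pairing responses in either game, a routine parity check, and that the threshold $|S|\ge3$ needed for the automatic separation of $v$ holds in every case with $n\ge6$.
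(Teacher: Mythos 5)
Your proof is correct and follows essentially the same route as the paper's: the $\diam\le 2$ collapse via Corollary~\ref{mbrg_mbtrg}(a), the twin class for $n=3$, the explicit pairings for $n\in\{4,5\}$, and for $n\ge 6$ the reduction $d_{C_n+K_1}(u_i,u_j)=d_{C_n,1}(u_i,u_j)$ together with the hub's all-ones code, after which the strategies of Proposition~\ref{mbtrg1_cycle} and Remark~\ref{rem_cycle_57} transfer --- in fact your odd-$n$ $M$-game analysis (take $u_n$ first, run the pairing on $Y$, certify with Lemma~\ref{lem_cycle}) spells out exactly what the paper compresses into ``the same method of proof.'' One peripheral slip, not load-bearing for the statement: a lone size-$3$ gap straddling $u_n$ with neighboring gaps of size at most $1$ does \emph{not} violate the hypotheses of Lemma~\ref{lem_cycle} (condition (1) permits one such gap); what $B^*$ can actually force in the $B$-game, defeating the certificate, is two \emph{neighboring} gaps of size $2$ --- e.g., with $n=2x+1$ and $M^*$ holding $u_{2x-3}$, $u_{2x}$, $u_2$, the gaps $\{u_{2x-2},u_{2x-1}\}$ and $\{u_n,u_1\}$ violate condition (2) --- which is precisely why the paper invokes Lemma~\ref{lem_cycle2} with $k\ge 2$ there.
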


Next, we consider trees with some restrictions. We recall some terminology and notations. Fix a tree $T$. A vertex $\ell$ of degree one is called a \emph{terminal vertex} of a major vertex $v$ if $d(\ell, v)<d (\ell, w)$ for every other major vertex $w$ in $T$. The \emph{terminal degree}, $ter_T(v)$, of a major vertex $v$ is the number of terminal vertices of $v$ in $T$, and an \emph{exterior major vertex} is a major vertex with positive terminal degree. Let $M(T)$ be the set of exterior major vertices of $T$. For each $i\in[3]$, let $M_i(T)=\{w\in M(T): ter_T(w)=i\}$, and let $M_4(T)=\{w \in M(T):ter_T(w) \ge 4\}$; note that $M(T)=\cup_{j=1}^{4}M_j(T)$.

\begin{theorem}
Let $k\in\mathbb{Z}^+$ and $T$ be a tree that is not a path. Further, suppose that $T$ contains neither a degree-two vertex nor a major vertex with terminal degree zero. Then
\begin{equation*}
O_{R,k}(T)=\left\{
\begin{array}{ll}
\mathcal{B} & \mbox{if } |M_4(T)|\ge 1 \mbox{ or } |M_3(T)|\ge 2 \mbox{ for all }k\ge1,\\
{} & \mbox{ or } M_4(T)=\emptyset, |M_3(T)|=1, |M_2(T)|\ge2 \mbox{ and } k=1,\\
{} & \mbox{ or } M_4(T)=M_3(T)=\emptyset, |M_2(T)|\ge 4 \mbox{ and } k=1,\\
\mathcal{N} & \mbox{if } M_4(T)=\emptyset, |M_3(T)|=1, |M_2(T)|\in\{0,1\}\mbox{ and } k=1,\\
{}  & \mbox{ or } M_4(T)=\emptyset \mbox{ and } |M_3(T)|=1 \mbox{ for all }k\ge2,\\
{} & \mbox{ or } M_4(T)=M_3(T)=\emptyset, |M_2(T)|= 3 \mbox{ and } k=1,\\
\mathcal{M} & \mbox{if } M_4(T)=M_3(T)=\emptyset, |M_2(T)|= 2 \mbox{ and } k=1,\\
{} & \mbox{ or } M_4(T)=M_3(T)=\emptyset \mbox{ and } M_2(T)\neq\emptyset \mbox{ for all }k\ge2.
\end{array}\right.
\end{equation*}
\end{theorem}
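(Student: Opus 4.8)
The plan is to run a case analysis driven entirely by the twin structure of $T$. Since $T$ has no degree-two vertex, every leaf is adjacent to a unique major vertex, so the terminal vertices of an exterior major vertex $v$ are exactly the leaves adjacent to $v$, and they form a twin equivalence class of cardinality $ter_T(v)$; thus $M_i(T)$ records the major vertices whose leaf-class has cardinality $i$, with $M_4(T)$ absorbing all cardinalities $\ge 4$. A preliminary structural remark I would record first: deleting the leaves of $T$ leaves a tree $T'$ on the major vertices, and each leaf of $T'$ is a major vertex of terminal degree $\ge 2$, whence $|M_2(T)\cup M_3(T)\cup M_4(T)|\ge 2$ unless $T=K_{1,\beta}$. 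This guarantees that the configurations excluded from the statement (e.g.\ $M_3=M_4=\emptyset$ with $|M_2|\le 1$) never occur. The first $\mathcal{B}$-line is then immediate: a class of size $\ge 4$, or two classes of size $3$, give $O_{R,k}(T)=\mathcal{B}$ for all $k$ by Corollary~\ref{mbtrg_twin}.

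The governing dichotomy is that the only inter-vertex distances relevant to resolving these trees are $2$ (a vertex and the other leaf at its own exterior major vertex) and $3$ (leaves at adjacent major vertices, or a leaf and a neighbor of its major vertex), and $d_k$ tells $2$ from $3$ precisely when $k\ge 2$. I would treat $k\ge 2$ first; there it suffices to assume we are not in the all-$k$ $\mathcal{B}$ cases, so $M_4(T)=\emptyset$ and $|M_3(T)|\le 1$. I propose the pairing $X$ consisting of the two-leaf pair $\{\ell_v,\ell'_v\}$ of each $v\in M_2(T)$, the pair $\{c,m_c\}$ of each $c\in M_1(T)$ with its unique leaf $m_c$, and, when $M_3(T)=\{w\}$, two of the three leaves of $w$. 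The key lemma to verify is that for $k\ge 2$ every transversal of $X$ (in the $M_3$ case, every transversal together with the third leaf of $w$) is a distance-$k$ resolving set: a chosen leaf pins its own major vertex at distances $1,2$ while sending all else to distance $\ge 3$, and a chosen marker at each $M_1$-branch vertex separates the otherwise-colliding leaves around that branch. This yields a pairing distance-$k$ resolving set when $M_3(T)=\emptyset$ (so $O_{R,k}(T)=\mathcal{M}$ by Observation~\ref{pair_resolving}(a)) and a quasi-pairing one when $|M_3(T)|=1$ (so $O_{R,k}(T)\in\{\mathcal{M},\mathcal{N}\}$ by Observation~\ref{pair_resolving}(b)); in the latter case Breaker, moving first in the $B$-game, occupies two of the three leaves of $w$ by her second move, which by Observation~\ref{obs_twin}(b) denies Maker any distance-$k$ resolving set and excludes $\mathcal{M}$, leaving $O_{R,k}(T)=\mathcal{N}$.

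The case $k=1$ is where the work concentrates, because under the adjacency metric all distances $\ge 2$ collapse to $2$. After placing all but one leaf of each exterior major vertex, the one remaining leaf of every major vertex together with every $M_1$-vertex share the constant code $\mathbf{2}$, and this ``all-$2$'' class can be broken only by adjacent landmarks (a leaf $\ell_v$ is flagged only by $v$ or by $\ell_v$). I would quantify the resulting inflation of $\dim_1(T)$ and compare it with $n/2$. When $M_3=M_4=\emptyset$ and $|M_2|=2$ I exhibit a pairing distance-$1$ resolving set---the twin pairs, the single pair of the two $M_2$-vertices (which supplies the two separating landmarks), and a pair $\{c,m_c\}$ for each $M_1$-vertex---so that $\dim_1(T)=\lfloor n/2\rfloor$ and $O_{R,1}(T)=\mathcal{M}$. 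When $|M_2|=3$, or when $|M_3|=1$ and $|M_2|\le 1$, a parallel quasi-pairing hands Maker the $M$-game while the strict bound $\dim_1(T)=\lceil n/2\rceil>\lfloor n/2\rfloor$ (or, in the $M_3$ subcase, Breaker's two-leaf strategy at $w$) denies him the $B$-game, forcing $O_{R,1}(T)=\mathcal{N}$. Finally, when $|M_2|\ge 4$, or when $|M_3|=1$ and $|M_2|\ge 2$, the bound $\dim_1(T)\ge\lceil n/2\rceil+1$ combined with Observation~\ref{obs_dimK}(b) forces $O_{R,1}(T)=\mathcal{B}$.

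The main obstacle, and the place where the ``outcome depends only on $M_2,M_3,M_4$'' phenomenon must be earned, is uniform control of $\dim_1(T)$ over all admissible placements of the $M_1$-vertices: I expect to prove that inserting one $M_1$-branch vertex raises both $\dim_1(T)$ and $\lceil n/2\rceil$ by the same amount (one extra separating landmark for the two added vertices), so that the decisive comparison of $\dim_1(T)$ with $n/2$ is invariant under $M_1$-augmentation and hence a function of $(|M_2|,|M_3|,|M_4|)$ alone. The remaining labor is the routine-but-delicate check that every transversal of each proposed (quasi-)pairing genuinely resolves under the truncated metric---verifying the finitely many local code patterns around each exterior major vertex and each $M_1$-branch vertex---which I would establish once as a lemma and reuse across the cases rather than repeat per sub-case.
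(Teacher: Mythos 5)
Your proposal is correct and follows essentially the same route as the paper's proof: the same twin-class corollary for the unconditional $\mathcal{B}$ cases, the same pairing/quasi-pairing systems (leaf pairs at $M_2$-vertices, $\{u_j,m_j\}$ pairs at $M_1$-vertices, two of the three leaves at the $M_3$-vertex with the third leaf as the quasi-pairing vertex, and the extra pair of the two $M_2$-vertices when $k=1$ and $|M_2(T)|=2$), the same Breaker strategy of seizing two of the three twin leaves in the $B$-game, and the same comparison of $\dim_1(T)$ against $n/2$ driven by the constant-code collisions under the truncated metric. Your $M_1$-augmentation invariance is merely a repackaging of the paper's explicit inequalities, which carry the number $x$ of $M_1$-vertices through both sides of the bound $\dim_1(T)\ge \lceil |V(T)|/2\rceil+1$, so the two arguments coincide in substance.
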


\begin{proof}
Let $k\in\mathbb{Z}^+$, and let $T$ be a tree as described in the statement of the present theorem. Then each vertex of $T$ is either a leaf or an exterior major vertex. Let $M_1(T)=\{u_1, u_2, \ldots, u_x\}$ and $M'(T)=\cup_{i=2}^{4}M_i(T)=\{v_1, v_2, \ldots, v_z\}$, where $x\ge0$ and $z\ge1$. If $x\ge1$, then, for each $j\in[x]$, let $m_j$ be the terminal vertex of $u_j$; notice that $m_ju_j\in E(T)$. For each $i\in[z]$, let $ter_T(v_i)=\sigma_i\ge2$ and let $\{\ell_{i,1}, \ldots, \ell_{i, \sigma_i}\}$ be the set of terminal vertices of $v_i$ in $T$.

First, suppose $|M_4(T)|\ge1$; then Corollary~\ref{mbtrg_twin}(a) implies that $O_{R,k}(T)=\mathcal{B}$ for all $k\ge1$. Second, suppose $|M_3(T)|\ge2$; then Corollary~\ref{mbtrg_twin}(b) implies that $O_{R,k}(T)=\mathcal{B}$ for all $k\ge1$.

Third, suppose that $M_4(T)=\emptyset$ and $|M_3(T)|=1$; then $z=1+|M_2(T)|$. We may assume that $ter_T(v_z)=3$ by relabeling the vertices of $T$ if necessary. If $z=1$ (i.e., $M_2(T)=\emptyset$), then $T=K_{1,3}$ and $O_{R,k}(T)=\mathcal{N}$ for all $k\ge1$ by Example~\ref{example_diam2}(b).

Now, suppose that $z\ge2$ (i.e., $M_2(T)\neq\emptyset$); then $ter_T(v_i)=2$ for each $i\in[z-1]$. For any minimum distance-$k$ resolving set $W$ of $T$, Observation~\ref{obs_twin}(b) implies that $|W\cap\{\ell_{z,1}, \ell_{z,2}, \ell_{z,3}\}|\ge2$ and $|W \cap \{\ell_{i,1}, \ell_{i,2}\}|\ge1$ for each $i\in[z-1]$. By relabeling the vertices of $T$ if necessary, let $W_0=(\cup_{i=1}^{z-1}\{\ell_{i,1}\}) \cup\{\ell_{z,1}, \ell_{z,3}\} \subseteq W$. Then, for any distinct $i,j\in[z]$ and for any distinct $\alpha, \beta\in[x]$, we have the following: (i) $\code_{W_0,1}(\ell_{i,2})=\code_{W_0,1}(\ell_{j,2})=\code_{W_0,1}(m_{\alpha})=\code_{W_0,1}(m_{\beta})$, $R_1\{\ell_{i,2}, \ell_{j,2}\}=\{\ell_{i,2}, v_i, \ell_{j,2}, v_j\}$, $R_1\{m_{\alpha}, m_{\beta}\}=\{m_{\alpha}, u_{\alpha}, m_{\beta}, u_{\beta}\}$ (if $x\ge2$), and $R_1\{\ell_{i,2}, m_{\alpha}\}=\{\ell_{i,2}, v_i, m_{\alpha}, u_{\alpha}\}$ (if $x\ge1$); (ii) for $k\ge2$, $\code_{W_0,k}(\ell_{i,2})\neq\code_{W_0,k}(\ell_{j,2})$. If $k=1$ and $z=2$, then the first player has a winning strategy: (i) in the $M$-game, $M^*$ can select $\ell_{2,3}$ and then exactly one vertex of each pair in $\{\{\ell_{1,1}, \ell_{1,2}\}, \{\ell_{2,1},\ell_{2,2}\}, \{v_1, v_2\}\} \cup (\cup_{i=1}^{x}\{\{u_i, m_i\}\})$ thereafter, and thus occupying a distance-$1$ resolving set of $T$ in the course of the MB$1$RG; (ii) in the $B$-game, $B^*$ can occupy two vertices of $\{\ell_{2,1}, \ell_{2,2}, \ell_{2,3}\}$ after her second move, and thus preventing $M^*$ from occupying vertices that form any distance-$1$ resolving set of $T$ in the course of the MB$1$RG. If $k=1$ and $z\ge3$, then $\dim_1(G)\ge2z+x\ge\lceil\frac{1+3z+2x}{2}\rceil+1=\lceil\frac{|V(T)|}{2}\rceil+1$; thus, $O_{R,1}(T)=\mathcal{B}$ by Observation~\ref{obs_dimK}(b). If $k\ge2$, then $A=(\cup_{i=1}^{z}\{\{\ell_{i,1}, \ell_{i,2}\}\}) \cup (\cup_{j=1}^{x}\{\{u_j, m_j\}\})$ is a quasi-pairing distance-$k$ resolving set of $T$, and the first player has a winning strategy in the MB$k$RG: (i) in the $M$-game, $M^*$ can occupy $\ell_{z,3}$ after his first move and exactly one vertex of each pair in $A$ thereafter; (ii) in the $B$-game, $B^*$ can occupy two vertices of $\{\ell_{z,1}, \ell_{z,2}, \ell_{z,3}\}$ after her second move.

Fourth, suppose that $M_4(T)=\emptyset=M_3(T)$. Since $T$ is not a path, $M_2(T)\neq\emptyset$; notice that $|M_2(T)|=z\ge2$ in this case. Let $C=(\cup_{i=1}^{z}\{\{\ell_{i,1}, \ell_{i,2}\}\}) \cup (\cup_{j=1}^{x}\{\{u_j, m_j\}\})$. If $k\ge2$, then $C$ is a pairing distance-$k$ resolving set of $T$; thus, $O_{R,k}(T)=\mathcal{M}$ by  Observation~\ref{pair_resolving}(a). So, suppose $k=1$. If $z=2$, then $C\cup \{\{v_1, v_2\}\}$ is a pairing distance-$1$ resolving set of $T$; thus, $O_{R,1}(T)=\mathcal{M}$ by  Observation~\ref{pair_resolving}(a). If $z=3$, then $O_{R,1}(T)=\mathcal{N}$: (i) in the $B$-game of the MB$1$RG, $B^*$ has a winning strategy since $\dim_1(T) \ge x+5=\lceil\frac{2x+9}{2}\rceil=\lceil\frac{|V(T)|}{2}\rceil>\frac{|V(T)|}{2}$ (via an argument similar to the third case of the present proof); (ii) in the $M$-game of the MB$1$RG, $M^*$ can select $v_1$ and then exactly one vertex of each pair in $C \cup\{\{v_2, v_3\}\}$ thereafter, and thus occupying vertices that form a distance-$1$ resolving set of $T$. If $z\ge4$, then $\dim_1(T)\ge 2z+x-1\ge \lceil\frac{3z+2x}{2}\rceil+1=\lceil\frac{|V(T)|}{2}\rceil+1$ using a similar argument shown in the third case of the present proof; thus, $O_{R,1}(T)=\mathcal{B}$ by Observation~\ref{obs_dimK}(b).~\hfill
\end{proof}

We conclude this section with a couple of questions.

\begin{question}
Among connected graphs $G$ of a fixed diameter $\lambda$, how many of the $\lambda+1 \choose 2$ ternary-valued, monotone functions on $[\lambda-1]$ are realized as $O_{R,k}(G)$? Which ones?
(We speculate that answers to the questions are more accessible for certain classes of graphs such as trees.)
\end{question}

\begin{question}
For a connected graph $G$ and each $k\in[\diam(G)-1]$, is there an algorithm to determine the time ($M_{R,k}$, $M'_{R,k}$, $N_{R,k}$, $N'_{R,k}$, $B_{R,k}$, and $B'_{R,k}$) for the Maker or the Breaker to win MB$k$RG?
\end{question}

\textbf{Acknowledgements.} The authors thank the anonymous referees for their helpful comments.


\end{document}